\newtheorem{thm}{Theorem}[section]
\newtheorem{prop}[thm]{Proposition}
\newtheorem{defi}[thm]{Definition}
\theoremstyle{remark}
\newtheorem{ex}[thm]{Example}
\newtheorem{rmk}[thm]{Remark}
\newtheorem*{theorem*}{Theorem}
\newtheorem*{cor*}{Corollary}
\newtheorem*{proposition*}{Proposition}
\newcommand{\Z}{\mathbb{Z}}
\newcommand{\Q}{\mathbb{Q}}
\newcommand{\R}{\mathbb{R}}
\newcommand{\F}{\mathbb{F}}
\newcommand{\OO}{\mathbb{O}}
\newcommand{\XX}{\mathbb{X}}
\newcommand{\spinc}{Spin^c}
\newcommand{\zetap}{\Z_p}
\newcommand{\lp}{L(p,q)}
\newcommand{\permut}{\mathfrak{S}_n}
\newcommand{\tildec}{\widetilde{\mathrm{GC}}}
\newcommand{\tildeh}{\widetilde{\mathrm{GH}}}
\newcommand{\hatc}{\widehat{\mathrm{GC}}}
\newcommand{\hath}{\widehat{\mathrm{GH}}}
\newcommand{\minusc}{\mathrm{GC}^-}
\newcommand{\minush}{\mathrm{GH}^-}
\newcommand{\taugen}[2]{\widetilde{\tau}_{#1,#2}}
\newcommand{\funct}[3]{#1 : #2 \longrightarrow #3}
\newcommand{\iab}[2]{\mathcal{I}(#1, #2)}
\newcommand{\segno}[1]{\mathcal{S}(#1)}
\newcommand{\generalperm}{\widetilde{\mathfrak{S}}_n}
\title[A note on grid homology in lens spaces]{A note on grid homology in lens spaces:\\$\Z$ coefficients and computations}
\author{Daniele Celoria}
\date{}
\begin{document}
\maketitle
\begin{abstract}
We present a combinatorial proof for the existence of the sign refined grid homology in lens spaces, and a self contained proof that $\partial_\Z^2 = 0$. We also present a Sage program that computes $\widehat{\mathrm{GH}} (L(p,q),K;\mathbb{Z})$, and provide empirical evidence supporting the absence of torsion in these groups.
\end{abstract}

\section{Introduction}\label{sec:intro}
Ozsv\'ath and Szab\'o's Heegaard Floer homology \cite{holdisk3man} is undoubtedly one of the most powerful tools of recent discovery in low dimensional topology. It has far-reaching consequences and has been used to solve long-standing conjectures (for a survey of some results see \emph{e.g.}~\cite{ozsvath2006introduction}, \cite{andrassurvey} and \cite{homsurvey}). 

Roughly speaking, it associates\footnote{There are actually many different variants of the theory, which we ignore presently; in the following sections we will define some variants which will be relevant to our discussion.} a graded group to a closed and oriented 3-manifold $Y$, the \emph{Heegaard Floer homology} of $Y$, by applying a variant of Lagrangian Floer homology in a high dimensional manifold determined by a Heegaard decomposition of $Y$. 

Soon after its definition, it was realised independently in \cite{holdisknot} and \cite{rasmussenknot} that a nullhomologous knot $K \subset Y$ induces a  filtration on the  complex whose homology is the Heegaard Floer homology of $Y$. Furthermore, the filtered quasi-isomorphism type of this complex is an invariant of the couple $(Y,K)$, denoted by $\mathrm{HFK}(Y,K)$.

The major computational drawback of these theories lies in the differential, which is defined through a count of pseudo-holomorphic disks with appropriate boundary conditions. 
Nonetheless, a result of Sarkar and Wang \cite{sarkarwang} ensures that --after a choice of a suitable doubly pointed Heegaard  diagram $\mathcal{H}$ for $(Y,K)$-- the differential can be computed directly from the combinatorics of $\mathcal{H}$. If, moreover, $Y$ is a rational homology 3-sphere admitting a Heegaard splitting of genus $1$ (\emph{i.e.} $Y = S^3$ or $Y$ is a lens space $\lp$), the homology $\mathrm{HFK}(Y,K)$ admits a neat combinatorial definition, known as \emph{grid homology}.

Grid homology in  $S^3$ was pioneered by Manolescu, Oszv\'{a}th and Sarkar in \cite{manolescu2009combinatorial}, and for lens spaces by Baker, Hedden and Grigsby in \cite{BGH}; as the name suggests, both the ambient manifold and the knot are encoded in a grid, from which complex and differential for the grid homology can be extracted by simple combinatorial methods.\\

After establishing the necessary background, in Section~\ref{sec:gridhom} we will present the definition of grid homology in lens spaces as given in \cite{BGH}; here, we produce a purely combinatorial proof that $\partial^2 = 0$. Note that the analogous proof in~\cite{BGH} is indirect, and relies on the well-definedness of the analytic theory.

The grid homology in lens spaces from~\cite{BGH} is only defined over $\F_2$. We provide a lift to integer coefficients by proving the existence and uniqueness for \emph{sign assignments} of grid diagrams. A sign assignment is a coherent choice of sign for each term appearing in the differential of the grid chain complex.

\begin{thm}\label{thm:main}
Sign assignments exist on all grids representing a knot $K\subset \lp$, and can be described combinatorially. Moreover, for a fixed grid diagram, the sign refined grid homology does not depend on the choice of a sign assignment.
\end{thm}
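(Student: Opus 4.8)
The plan is to follow, and adapt to the toroidal grids of \cite{BGH}, the strategy used for grids in $S^3$ in \cite{manolescu2009combinatorial}. I would first fix the combinatorial framework: a \emph{sign assignment} is a function $\segno{\cdot}$ assigning to each empty rectangle $r$ connecting two generators of a grid $G$ a value $\segno{r}\in\{\pm1\}$, required to satisfy the coherence (``square'') conditions --- whenever a domain from a generator $x$ to a generator $z$ is the ordered composite of two empty rectangles in exactly two ways, $r_1*r_2$ and $r_1'*r_2'$, one demands $\segno{r_1}\,\segno{r_2}=-\,\segno{r_1'}\,\segno{r_2'}$ --- together with the normalization conditions governing (de)stabilization that are needed to pin down the gauge. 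Granting existence, $\partial_\Z^2=0$ is then immediate: Section~\ref{sec:gridhom} already identifies, for each pair $x,z$, the domains responsible for $\partial^2$ and shows that the ways of decomposing each of them into two empty rectangles come in pairs, so every term of $\partial_\Z^2 x$ occurs exactly twice and the square condition makes the two occurrences cancel. I would take care to check that the domains wrapping around the torus --- a genuinely new feature of the lens space picture, absent in $S^3$ --- still admit precisely two such decompositions, so that they fall under the same square rule.

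For \emph{existence} I would package the data as a two-dimensional cell complex $\XX$: its $0$-cells are the generators of $G$, its $1$-cells the empty rectangles, and its $2$-cells are attached along the boundaries of the squares. A sign assignment is then the same thing as a trivialization of the canonical $\Z/2$-valued $2$-cocycle $w$ on $\XX$ that records the $-1$'s forced by the square rule, so one exists if and only if $[w]=0$ in $H^2(\XX;\Z/2)$; since $\XX$ is $2$-dimensional this amounts to the combinatorial assertion that every $\Z/2$-cycle of squares contains an even number of them. This is the crux. In $S^3$ one proves the analogous complex is simply connected --- every loop of rectangles is a product of square boundaries --- and this, together with the evenness statement, kills the obstruction; over $\lp$ the underlying torus contributes extra $1$-cycles, so I would either (i) enlarge $\XX$ by the additional wrap-around square domains and redo the connectivity and evenness analysis directly on the toroidal grid, or (ii) unroll the grid for $K\subset\lp$ into a (large) planar grid for the preimage $\widetilde{K}\subset S^3$, pull back a sign assignment via \cite{manolescu2009combinatorial}, and symmetrize it over the $\Z/p$ deck group --- the real work in (ii) being to verify that the symmetrized function descends to a sign assignment on the quotient grid. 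An explicit winding-number formula for $\segno{\cdot}$, in the spirit of the construction of \cite{manolescu2009combinatorial}, would subsume all of this, and I would attempt that first since it also delivers the ``described combinatorially'' part of the statement outright.

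Finally, uniqueness and independence of the homology. The ratio of two sign assignments $\segno{\cdot}$ and $\segno{\cdot}'$ is a $\Z/2$-valued $1$-cochain on $\XX$ that is closed (it vanishes around every square boundary), hence a class in $H^1(\XX;\Z/2)$; the same connectivity input used for existence shows it is a coboundary $\delta g$ for a function $g$ on the generators. Rescaling each generator $x$ by $g(x)$ is then a chain isomorphism between the two sign-refined complexes built from $\segno{\cdot}$ and $\segno{\cdot}'$ --- and similarly for the hat and tilde variants $\hatc$, $\tildec$ --- so it identifies $\minush$, $\hath$ and $\tildeh$ computed from the two choices. Thus the whole theorem rests on a single combinatorial fact about the $2$-complex $\XX$: that the squares (together with the wrap-around squares) generate $H_1(\XX;\Z/2)$ and that every $2$-cycle among them is even, equivalently $H^1(\XX;\Z/2)=0$ and $[w]=0$ in $H^2(\XX;\Z/2)$. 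I expect verifying this for the toroidal grids to be the main obstacle; once it is in place, existence (by obstruction theory) and uniqueness (by gauge equivalence) both follow formally.
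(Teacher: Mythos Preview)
Your approach is genuinely different from the paper's. You propose the obstruction-theoretic framework in the spirit of \cite{manolescu2007combinatorial}: build a $2$-complex $\XX$ from generators, rectangles, and square relations, and reduce existence and uniqueness to the vanishing of $[w]\in H^2(\XX;\Z/2)$ and of $H^1(\XX;\Z/2)$. The paper instead follows Gallais \cite{gallais2008}: it introduces the Spin central extension $\generalperm$ of $\permut$, defines a map $\varphi\colon Rect(G)\to\generalperm\times(\zetap)^n$ sending a rectangle to a generalized transposition together with the change in $p$-coordinates, and shows that \emph{any} set-theoretic section $\rho\colon\permut\to\generalperm$ produces a sign assignment via $\mathcal{S}_\rho(r)=+1$ iff $\rho(x)\varphi(r)=\rho(y)$. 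Existence is then a direct check that the defining relations of $\generalperm$ (namely $\taugen{i}{j}\taugen{j}{i}=1$, $\taugen{i}{j}^2=z$, $\taugen{i}{j}\taugen{k}{l}=z\,\taugen{k}{l}\taugen{i}{j}$, and the braid relation) encode precisely the three sign conditions, carried out case-by-case over the polygon types $M=2,3,4$ already classified in the $\partial^2=0$ argument. This is constructive and delivers the combinatorial description immediately, with no connectivity or cohomology computation needed; the price is the extra algebraic machinery of $\generalperm$.

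Your route could be made to work, but the key step --- the cohomological vanishing for the toroidal $\XX$ --- is precisely what you flag as the main obstacle and do not carry out, so as it stands the proposal is a plan rather than a proof. A few specific concerns. First, your option (ii) is shaky: ``symmetrizing'' a $\{\pm1\}$-valued function over a $\zetap$ deck action has no evident meaning, and what you actually need is deck-\emph{invariance} of the pulled-back sign, which is not automatic and would itself require an argument. Second, the paper defines sign assignments on all of $Rect(G)$, not only on empty rectangles, and imposes explicit normalizations on $\alpha$- and $\beta$-strips (your ``normalization conditions governing (de)stabilization'' are not these); the annulus conditions matter for uniqueness up to gauge and are exactly what the relations $\taugen{i}{j}\taugen{j}{i}=1$ and $\taugen{i}{j}^2=z$ capture. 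Your uniqueness paragraph, by contrast, matches the paper's gauge argument closely once one grants the connectivity input: two signs differ by a $1$-cocycle, which is a coboundary $\delta g$, and $x\mapsto g(x)\,x$ is the desired chain isomorphism.
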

The proof of this theorem is the content of Section \ref{sec:zcoeff}. 
The sign refinement of the theory is carried on using a group theoretic reformulation of sign assignments due to Gallais \cite{gallais2008}.

Note that the theory developed in this paper is not sufficient to establish the fact that the sign refined grid homology is an invariant of links in lens spaces (the analogous result in~\cite{BGH} follows from an isomorphism with the analytical theory).
This flaw has however been addressed more recently in the paper~\cite{tripp} by Tripp, where invariance of grid homology with integer coefficients is proved.\\

Finally, in Section \ref{sec:computations} we present some computations and examples, together with a description of the program used to make them.
An interactive online version of this program is freely available on my homepage \cite{homepage}. 
With this tool, we are able to show that small grids (see the discussion in Section~\ref{sec:computations}) have torsion free grid homologies:
\begin{prop}
The sign refined grid homology of knots with small parameters is torsion free.
\end{prop}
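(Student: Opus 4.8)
The plan is to reduce the proposition to a finite, explicit computation performed by the \sage program mentioned above, organized so that its output is a verifiable certificate. By the Theorem, every grid $G$ representing a knot $K \subset \lp$ carries a combinatorially described sign assignment, and $\hath(\lp, K; \Z)$ does not depend on which one is chosen; hence it suffices to fix a single sign assignment for each grid and compute the homology of the resulting integral chain complex.

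First I would enumerate the grids with small parameters: fix small values of $p$, $q$ and of the grid number $n$, list the corresponding toroidal grid diagrams, and for each one record the underlying pair $(\lp, K)$ so that the results can be tabulated by knot type. For a fixed grid one constructs the generating set, computes the Maslov and Alexander gradings together with the $\spinc$ decomposition of $\hatc(G)$, and assembles $\partial_\Z$ as an integer matrix that is block diagonal with respect to the bigrading and the $\spinc$ splitting; the $\pm 1$ entries are read off from the combinatorial sign formula supplied by the Theorem, and one checks $\partial_\Z^2 = 0$ on the nose, as guaranteed by Section~\ref{sec:gridhom}.

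Next, for each bigraded block of each $\spinc$ summand I would compute homology over $\Z$ from the Smith normal forms of the two incident integer matrices: $\hath$ in that grading is torsion free exactly when no invariant factor exceeding $1$ occurs, i.e. when all elementary divisors are units. Running this over the finite catalogue of small-parameter grids and observing that every elementary divisor encountered equals $\pm 1$ proves the statement. As independent checks one verifies that the mod-$2$ reduction reproduces the $\F$-coefficient grid homology of \cite{BGH}, and that the answer is invariant under the grid moves relating diagrams of the same $(\lp,K)$.

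The principal obstacle is one of size, not of mathematics: the number of generators of a grid grows very rapidly (factorially) with the grid number, so the integral boundary matrices quickly become large, which is precisely why the proposition is restricted to small parameters. The natural mitigations are to treat one $\spinc$ structure and one bigrading block at a time, to use sparse integer linear algebra for the Smith normal form, and to pre-reduce each complex by cancelling differentials with $\pm 1$ entries before taking homology. A secondary, purely bookkeeping point is to make sure the fixed sign assignment really satisfies the cocycle-type conditions of the Theorem in the toroidal (rather than planar) setting; once that is arranged, the remainder of the argument is mechanical.
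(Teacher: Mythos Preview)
Your proposal is correct in spirit and matches the paper's overall strategy: the Proposition is established by a finite computer verification, not by a theoretical argument, and you have correctly identified the main ingredients (enumerate small grids, fix a sign assignment via the Spin extension, compute the integral differential, and read off the torsion).

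There are, however, two genuine differences worth noting. First, the paper does not compute $\hatc(G;\Z)$ directly. Instead it computes the simpler \emph{tilde} complex $\tildec(G;\Z)$, whose differential counts only rectangles containing no markings at all, and then recovers $\hath$ by factoring out the tensor product with $W^{\otimes(n-1)}$, where $W=\Z_{[0,0]}\oplus\Z_{[-1,-1]}$ (Proposition~\ref{prop:tilde}). This avoids the polynomial ring $\widehat{R}$ entirely and is the practical reason the computation is feasible; your direct approach to $\hatc$ would require handling the $V_i$-module structure. Second, the paper's verification is slightly weaker than what you outline: rather than computing Smith normal forms over $\Z$, it reports only that the groups are $r$-torsion free for $r\le 17$ in the listed parameter ranges. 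Your proposed Smith-normal-form computation would in principle certify full torsion freeness, which is stronger than what the paper actually checks; conversely, the paper's prime-by-prime check is cheaper but leaves open the (unlikely) possibility of large-prime torsion. Either route substantiates the Proposition as stated, since ``small parameters'' is understood informally and the result is presented as empirical evidence rather than a theorem.
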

This result provides empirical evidence for the absence of torsion in the knot Floer homology of knots in lens spaces. Analogous results for knots in the three sphere have been found by Droz in \cite{droz2008effective}.

\subsection*{Acknowledgments}
I would  like to thank my advisor Paolo Lisca and Andr{\'a}s Stipsicz for suggesting this topic, Paolo Aceto, Marco Golla, Enrico Manfredi, Francesco Lin and Agnese Barbensi for useful and interesting conversations and support. I would also like to thank the University of Pisa for the hospitality and the computational resources provided. Finally, I want to thank the referee for their detailed comments that helped to substantially improve the manuscript.

\section{Grid homology in lens spaces}\label{sec:gridhom}
\subsection{Representing knots with grids}\label{ssect:s3}
In what follows $p,q$ will always be two coprime integers, and the lens space $\lp$ is the (closed, oriented) 3-manifold obtained by $-\frac{p}{q}$ surgery on the unknot $\bigcirc \subset S^3$; to avoid confusion, a knot $K$ in a 3-manifold $Y$ will usually be denoted by $(Y,K)$.
\begin{defi}
Consider  a $n \times pn$ grid in $\R^2$, consisting of the segments 
 $\widetilde{\alpha}_i = (tnp, i)$ and $\widetilde{\beta}_j = (j, tn)$ with $i\in \{ 0, \ldots, n\} , \:j\in\{0, \ldots, np\}$ and  $t\in [0,1]$.
A \emph{twisted grid  diagram for $\lp$} is the grid on the torus given  by identifying $\widetilde{\beta}_0$ to $\widetilde{\beta}_{pn}$, and then $\widetilde{\alpha}_0$ to $\widetilde{\alpha}_{n}$ according to a twist depending on $q$ (see Figure~\ref{fig:identificazioni}): $$\alpha_{n} \ni (s,n) \sim (s-qn \:  (\text{mod }\;pn), 0) \in \alpha_{0}$$
Here $s \in [0,pn]$; the condition $(p,q) = 1$ guarantees that after the  identifications the planar grid becomes a toroidal grid.\\
Call $\alpha = \{ \alpha_i\} $ and $\beta = \{ \beta_i \}$ with $i \in \{1,\ldots ,n\}$ the $n$ horizontal (respectively vertical) circles obtained after the identifications in the grid.
\end{defi}

We can encode a link $L$ in $L(p,q)$ by placing a suitable version of the $\mathbb{X}$'s and $\mathbb{O}$'s for grid diagrams in $S^3$:
let $\mathbb{X} = \{ \mathbb{X}_i\}$ and $\mathbb{O} = \{ \mathbb{O}_i\},$ with $i=1, \ldots, n$ be two sets of markings.
Put each one of them in the little squares\footnote{For concreteness, think of the markings as having half integral coordinates in the planar grid.} of $G \setminus  \left(\alpha \cup  \beta \right)$ in such a way that each column\footnote{Beware! In a twisted toroidal grid a column ``wraps around'' a row $p$ times.} and row contains exactly one element of $\,\mathbb{X}$ and one of $\,\mathbb{O}$, and each square contains at most one marking. By rows and columns here we mean the regions of a grid --homeomorphic to annuli-- bounded by two consecutive $\alpha$ or $\beta$ curves respectively.

Now join with a segment each $\mathbb{X}$ to the $\mathbb{O}$ which lies on the same row, and each $\mathbb{O}$ to the $\mathbb{X} $ which lies on the same column (keeping in mind the twisted identification); with this convention we can encode an orientation\footnote{Note that this convention is the opposite of the one used in \cite{SOS}, but agrees with the one of \cite{BGH}; see also Remark \ref{rmk:opposite}.} for the link. To get an honest link, just remove self-intersections by converting 
each self-intersection to an overcrossing of the vertical segments over the horizontal ones (as in Figure~\ref{fig:griglia}).

The grid together with the markings is known as a multipointed Heegaard diagram for $(\lp, K)$ and $$|\alpha_i \cap \beta_j| = p \;\; \forall i,j \in \{1,\ldots ,n\}.$$
\begin{rmk}
There are two possible ways to connect each $\mathbb{X}_i$ to the corresponding $\mathbb{O}$ marking on the same row\slash column, but the isotopy class of the resulting link does not depend upon the possible choices. Indeed, the two links given by these different choices for each row\slash column are isotopic. This follows at once from the fact that the two possible arcs connecting two markings on the same row\slash column are --by construction-- related by a slide along a meridional disk of the Heegaard decomposition of $\lp$. In other words, the two arcs form a decomposition of the boundary of a disk that defines the given Heegaard splitting.
\end{rmk}

The integers $n,p$ and $q$ will be called the \emph{parameters} of the grid diagram $G$; the $p$ squares of height/length $n$ obtained by cutting the torus along $\alpha_1$  and $\beta_1$  (in the planar representation of the grid) are  called \emph{boxes}. 
It is worth to point out that the case in which $p=1$ and $q=0$  gives as expected a usual grid diagram for a link in $S^3$.
We will often deliberately forget the distinction between planar and toroidal grids, according to the motto ``draw on a plane, think on a torus''.

\begin{figure}[ht]
\includegraphics[width=10cm]{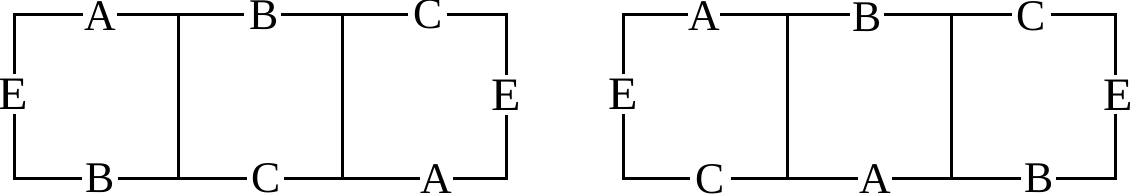}
\caption{Top-bottom identifications for a 3 dimensional grid for $L(3,1)$ on the left and $L(3,2)$ on the right.}
\label{fig:identificazioni}
\end{figure}
\begin{figure}[ht]
\includegraphics[width=9cm]{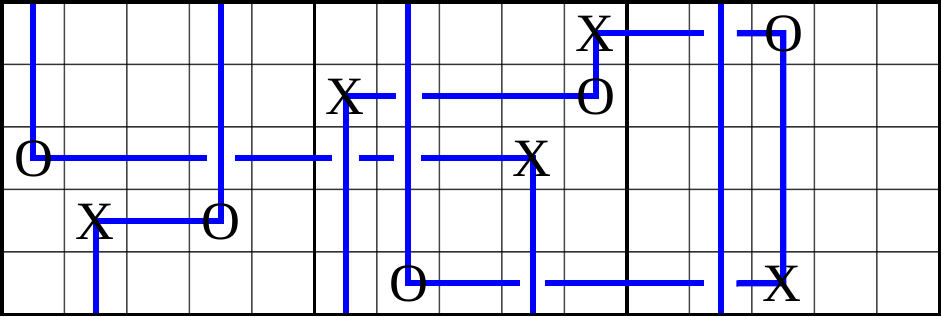}
\caption{The link obtained by joining $\mathbb{X}$'s and $\mathbb{O}$'s in a grid for $L(3,2)$ of grid dimension 5. Boxes are delimited by thicker black lines.}
\label{fig:griglia}
\end{figure}
\begin{rmk}\label{rmk:opposite}
Exchanging the role of the markings in a grid representing a knot $K$ produces a grid diagram for the same link with the opposite orientation on each component.
\end{rmk}

The following result is a consequence of \cite[Theorem~1.1]{BGH} and \cite[Section~2]{cromwell1995embedding}.
\begin{prop}
Every link in $L(p,q)$ can be represented by a grid diagram; two different grid representations of a link differ by a finite number of grid moves analogous to  Cromwell moves for grid diagrams in $S^3$:
\begin{itemize}
\item \emph{Translations:} these are just vertical and horizontal integer shifts of the grid (respecting the twisted identifications). 
\item \emph{(non-interleaving) Commutations:} if two adjacent row/columns $c_1$ and $c_2$ are such that the  markings of $c_1$ are contained in a connected component of $c_2$ with the two squares containing the markings removed, then they can be exchanged.
\item \emph{(de)Stabilisation:} these are the only moves that  change the dimension of the grid. There are 8 types of stabilisations, as shown in Figure~\ref{fig:gridmoves}. Destabilisations are just the inverse moves.
\end{itemize}
\begin{figure}[ht]
\includegraphics[width=12cm]{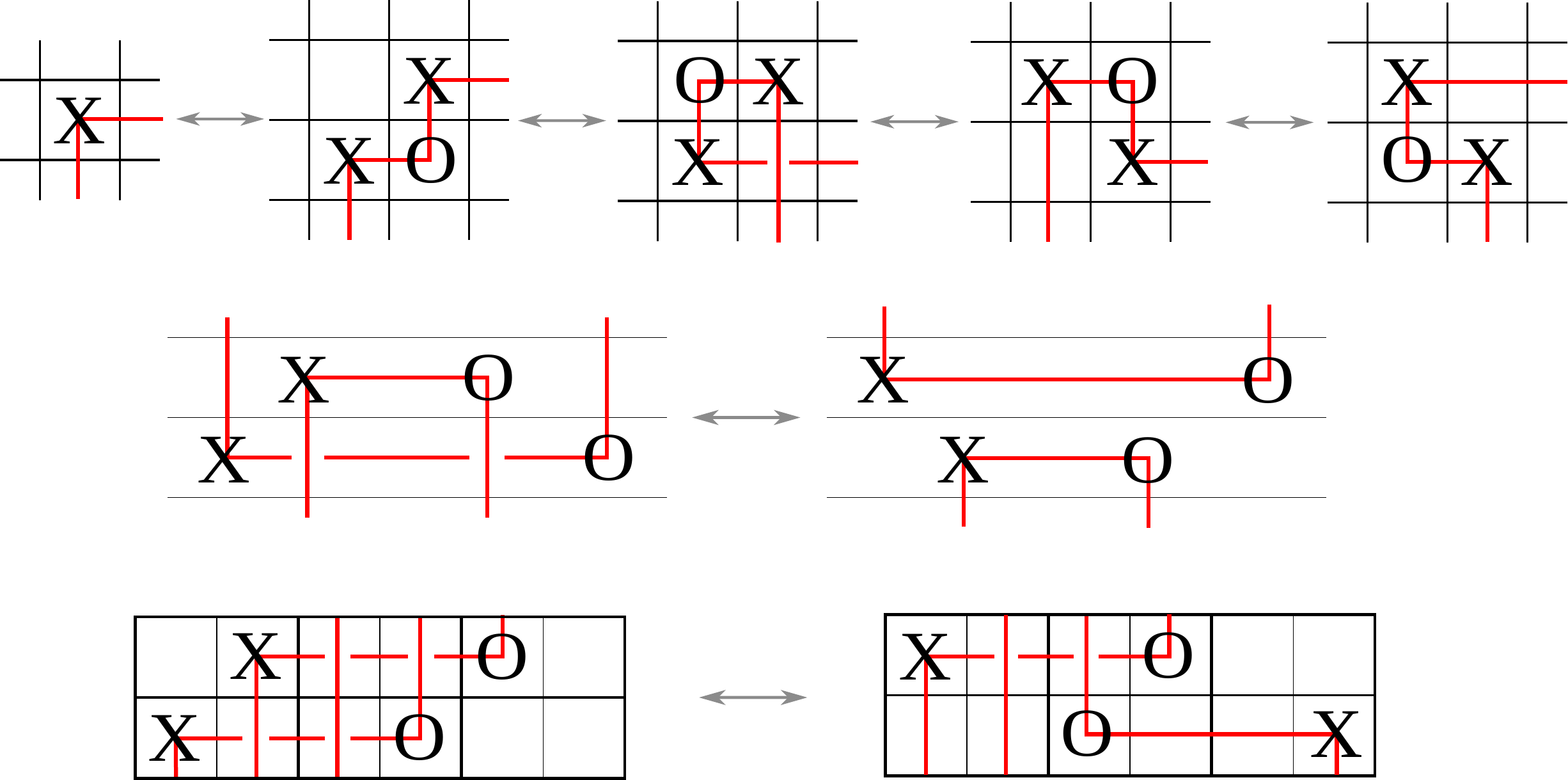}
\caption{Some examples of grid moves; in the top row of the figure we see four different kinds of stabilisations (there are other four where the roles of the markings are exchanged). In the middle, a schematic example of a row commutation. The bottom part of the figure displays an example of vertical translation in a grid of dimension $2$ for a knot in $L(3,1)$.}
\label{fig:gridmoves}
\end{figure}
\end{prop}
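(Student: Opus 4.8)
\noindent\emph{Proof plan.} My plan is to deduce the statement from the corresponding theorem for links in $S^3$, which is classical: the existence of grid (arc) presentations is Cromwell's theorem \cite{cromwell1995embedding}, and the completeness of the grid moves in $S^3$ is the Reidemeister-type theorem reproved combinatorially in \cite{manolescu2009combinatorial}. The bridge is the surgery description already recalled above: $\lp$ is $-\frac{p}{q}$ surgery on the unknot $\bigcirc\subset S^3$, so writing $\bigcirc^*\subset\lp$ for the dual core circle, a link $L\subset\lp$ disjoint from $\bigcirc^*$ is literally a link in the solid torus $\lp\setminus\nu(\bigcirc^*)\cong S^3\setminus\nu(\bigcirc)$.

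For existence I would first isotope $L$ off $\bigcirc^*$, so that it lies in $T^2\times I$ (the region between the two Heegaard solid tori), then project generically to the Heegaard torus $T^2$ and rectangularize the resulting diagram: approximate each strand by a polygonal path and replace each edge by a staircase of arcs parallel to the two coordinate directions of $T^2=\R^2/\Lambda$, pushing the ``vertical'' arcs into one solid torus and the ``horizontal'' ones into the other, resolving crossings accordingly. Choosing the grid size $n$ large enough that every horizontal arc meets exactly one of the circles $\alpha_i$ and every vertical arc exactly one of the $\beta_j$, and recording the corners of the arcs by the markings $\mathbb{X}$ and $\mathbb{O}$, produces a twisted grid diagram for $\lp$ representing $L$; one checks that the top--bottom identification forced by the surgery coefficient on $T^2$ is exactly the $q$-twisted one of the definition.

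For completeness of the moves I would argue as follows. Given two grid diagrams $G,G'$ for isotopic links, pass to the $S^3$ picture: $G$ and $G'$ become grid presentations of $L\cup\bigcirc^*$ and $L'\cup\bigcirc^{*\prime}$ in which the distinguished component has grid number one. An ambient isotopy $L\simeq L'$ in $\lp$ extends to an isotopy of the pairs in $S^3$ through grid-number-one unknotted distinguished components; I would put this isotopy in general position with respect to the stratification of the space of rectangular diagrams on the torus and read off the codimension-one walls it crosses. As in \cite{cromwell1995embedding} and \cite{manolescu2009combinatorial}, these are exactly: an ambient translation of the grid (a \emph{translation}), the creation of a crossing between two parallel grid circles that are then interchanged (a \emph{non-interleaving commutation}), and the birth or death of a small kink raising or lowering $n$ by one (a \emph{(de)stabilization}), the eight stabilization types coming from four corner positions times two markings. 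Projecting back to $\lp$ the walls not involving $\bigcirc^*$ yields precisely the three listed families, and finiteness follows from compactness of the isotopy interval.

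The step I expect to be the main obstacle is the passage from a general isotopy in $\lp$ to an isotopy of $(S^3,L\cup\bigcirc^*)$ keeping the distinguished component a grid-number-one unknot, so that no ``extra'' move involving $\bigcirc^*$ is ever required; and, hand in hand with this, checking that the planar-versus-toroidal bookkeeping survives the $q$-twisted identification --- in particular that ``commutation'' remains the non-interleaving move once a column is allowed to wrap $p$ times around a row, and that the eight stabilization types really exhaust what can occur. This is where the arguments of \cite{cromwell1995embedding} and \cite{BGH} do the genuine work, and for the complete verification I would ultimately cite them.
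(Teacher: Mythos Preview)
The paper does not supply a proof of this proposition: it is stated as a result imported from \cite{cromwell1995embedding} and \cite{BGH}, with no argument given beyond the citations and the accompanying figure. So there is no ``paper's own proof'' to compare your proposal against; in that sense your write-up already goes further than the paper does, and your concluding sentence---that you would ultimately cite \cite{cromwell1995embedding} and \cite{BGH} for the complete verification---matches exactly what the paper does.

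That said, as an independent sketch your plan has a genuine weak point in the completeness argument. You propose to ``pass to the $S^3$ picture'' by viewing a twisted grid $G$ for $L\subset\lp$ as a grid presentation in $S^3$ of $L\cup\bigcirc^*$ with the distinguished component of grid number one, and then invoke the $S^3$ Cromwell/Reidemeister theorem. But a twisted toroidal grid with $|\alpha_i\cap\beta_j|=p$ is \emph{not} a grid diagram in $S^3$, and there is no direct dictionary that turns it into one while keeping $\bigcirc^*$ as a single grid-number-one component; the $\beta$-curves wrap $p$ times around each row, so the combinatorics of rectangles, commutations, and stabilizations are genuinely different from the $S^3$ case rather than a restriction of it. The actual argument in \cite{BGH} goes through the identification of twisted grid diagrams with (toroidal) front projections of Legendrian links in the universally tight contact structure on $\lp$, and then shows that Legendrian isotopies and (de)stabilizations are realized by the listed grid moves---a route that stays inside $\lp$ rather than lifting to $S^3$. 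Your existence sketch (isotope off the dual core, project to the Heegaard torus, rectangularize) is fine in spirit and close to what is done; it is the ``lift the isotopy to $S^3$ and read off walls'' step that would need to be replaced or substantially reworked.
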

The homology class of a knot $K \subset \lp$ can be read directly from the grid; we just need to keep track of the signed number of intersections of the knot with a meridian of the torus. With the orientation conventions we have established (so that vertical arcs connect $\OO$'s to $\XX$'s):
$$H_1(\lp ;\Z) \ni [K] = \# \{ \alpha_1 \cap K\}   \;\;  (\text{mod }\;p).$$
If $G$ is a grid of parameters $(n,p,q)$, we call $n$ the \emph{dimension} or \emph{grid number} of $G$. The term \emph{minimal grid number} will be used when referring to the isotopy class of a knot $(\lp,K)$; in this case, we mean the quantity
$$GN(K) = \min \{n \:|\: G \mbox{  is a grid with parameters } (n,p,q) \mbox{ representing K}  \}.$$

\subsection{Generators of the complex}\label{ssec:complex}
In the following, we are going to define two different versions (sometimes also known as \emph{flavours}) of the grid homology for knots in lens spaces. They can both be defined by slight variations in the complex, the ground ring or the differential we will introduce below.
For clarity, we are going to restrict ourselves to $\F = \Z_2$ coefficients until the next section, and to knots throughout the paper.
\begin{defi}\label{def:generators}
Given a grid $G$ of dimension   $n$ representing a knot $K \subset \lp$, the \emph{generating set for $G$} is the set $S(G)$ comprising all bijections between $\alpha$ and $\beta$ curves. This corresponds to choosing $n$ points in $\alpha \cap \beta$ such that there is exactly one on each $\alpha $ and $ \beta$ curve.
There is a bijection $$S(G) \longleftrightarrow \permut \times \zetap^n$$ which can be described as follows:
since we fixed a cyclic labelling of the $\alpha $ and $ \beta$ curves it makes sense to speak of the $m$-\emph{th} intersection between two curves, with $0\le m\le p-1$; so if the $l$-\emph{th} component of a generator lies on the $m$-\emph{th} intersection of $\alpha_l$ and $\beta_j$ then the permutation $\sigma \in \permut$ associated will be such that $\sigma (l) = j$ and the $l$-\emph{th} component of $ \zetap^n$ will be $m$ (see Figure~\ref{fig:generatoritwisted}).\\
If $x \in S(G)$, we can thus write $x = (\sigma_x , (x_1^p, \ldots , x_n^p))$; we will refer to $\sigma_x$ as the \emph{permutation} component of the generator, and to $(x_1^p, \ldots , x_n^p)$ as its \emph{$p$-coordinates}.
\begin{figure}
\includegraphics[width =7cm]{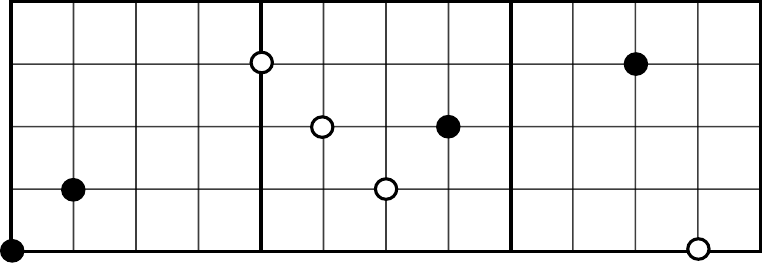}
\caption{Under the bijection described in Definition~\ref{def:generators} the white generator corresponds to $\left((14)(23),(2,1,1,1) \right)$, and the black to $\left((34),(0,0,1,2)\right) \in \mathfrak{S}_4 \times \Z_3^4$. }
\label{fig:generatoritwisted}
\end{figure}
\end{defi}
$S(G)$ can be endowed with a $\left(\Q,\Q, \zetap \right)$-valued  grading.  The first two degrees are known as \emph{Maslov} and  \emph{Alexander} degrees. The last one is the $\spinc$ degree; since it is preserved by the differential (Proposition \ref{teo:grosso}), it will provide a splitting of the complex into $p$ direct summands. All these degrees are going to be defined in a purely combinatorial way.
\begin{defi}
Let $A $ and $B$ denote two finite sets of points in  $\R^2$; let $\iab{A}{B}$ be the number of pairs $$((a_1,a_2),(b_1,b_2)) \subset A\times B$$ such that $a_i<b_i$ for $i=1,2$.\\
Denote by  $X(p,n)$ (respectively $Y(p,n) $) the set of $n$-tuples (respectively $pn $-tuples) of points contained in the $n \times pn$ 
(respectively $pn \times pn$) rectangle in $\R^2$ whose bottom vertices are $(0,0)$ and $(pn,0 )$; then define
$$C_{p,q} : X(p,n) \longrightarrow Y(p,n) $$
as the function sending a $n$-tuple $\{ (c_i , b_i) \}_{i=1, \ldots, n}$ to the $pn$-tuple
$$\{ (c_i + nqk\:\:  (\text{mod }\;np), b_i + nk) \}_{\substack{i=1, \ldots, n \\ k=0, \ldots, p-1}}$$
In order to avoid notational overloads, we are going to write $\widetilde{x} $ instead of $C_{p,q}(x)$; one example of such a function is described in Figure~\ref{fig:cpq}.
\begin{figure}[ht]
\includegraphics[width=11cm]{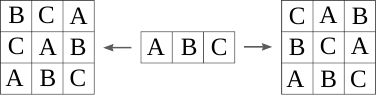}
\caption{A representation of the action of $C_{p,q}$ for $(p,q) =(3,1)$ and $(3,2)$ (on the left and right respectively).}
\label{fig:cpq}
 \end{figure}
\end{defi}
We can then define the Maslov degree as follows:
\begin{equation}\label{eqn:maslov}
M(x) = \frac{1}{p} \left[ \iab{\widetilde{x}}{\widetilde{x}} - \iab{\widetilde{x}}{\widetilde{\mathbb{O}}} -\iab{\widetilde{\mathbb{O}}}{\widetilde{x}}
  + \iab{\widetilde{\mathbb{O}}}{\widetilde{\mathbb{O}}}\right] + d(p,q,q-1) + 1
\end{equation}
In the equation above, $d(p,q,q-1)$ is a rational number known as the \emph{correction term} of $\lp$ associated to the $(q-1)$\emph{-th} $\spinc$ structure; as explained in \cite[Prop.~4.8]{absolutely}, it can be computed recursively as follows\footnote{A user-friendly online calculator for these correction terms can be found at \cite{homepage}.}:
\begin{itemize}
\item $d(1,0,0) = 0$
\item $d(p,q,i) = \left( \frac{pq - (2i +1 -p - q)^2}{4pq} \right) + d(q,r,j)$  where $r$ and $j$ denote the reduction of $p$ and $i$ $(\text{mod } \,q)$.
\end{itemize}
Similarly, the Alexander grading can be defined as:
\begin{equation}\label{eqn:alex}
A(x) = \frac{1}{2p} \left[ \iab{\widetilde{\mathbb{O}}}{\widetilde{\mathbb{O}}} -  \iab{\widetilde{\mathbb{X}}}{\widetilde{\mathbb{X}}}   + 2        \iab{\widetilde{\mathbb{X}}}{\widetilde{x}} - 2     \iab{\widetilde{\mathbb{O}}}{\widetilde{x}} \right]           +\frac{1-n}{2}
\end{equation}

By slightly modifying the differential we'll introduce in the next section, $A$ can be demoted to a filtration on the complex, rather than a degree. The complexes we are going to consider should be thought of as the graded objects associated to this filtration.

\begin{rmk}
Note that Equation~\eqref{eqn:alex} is not the standard formula used to define $A$; here we are using the fact (see \cite[Sec.~2]{degany2008some}) that in a grid of dimension $n$ for a knot in $S^3$ $$\iab{x}{J} - \iab{J}{x} =n,$$ with $J = \OO$ or $\XX$, and $x$ is any generator in $S(G)$.
\end{rmk}
Now call $(a_1^{\OO}, \ldots , a_n^{\OO})$ the $p$-coordinates of the generator whose components are in the lower left vertex of the squares  which contain a $\OO$ marking.
The $\spinc$ degree of $x = \left(\sigma_x , (a_1, \ldots, a_n)\right) \in S(G)$ is defined as:
$$S:  \mathfrak{S}_n \times \zetap^n \longrightarrow \zetap $$
\begin{equation}\label{eqn:spinc}
S(x) =  q-1 + \sum_{i=1}^{n} \left( a_i -  a_i^\mathbb{O} \right) \:\:\: (\text{mod }\;p)
\end{equation}
We are implicitly using a canonical identification between $\spinc (\lp)$ and $\zetap$ (\emph{cf.} \cite[Sec. 4.1]{absolutely}).

It is clear from the definition that the Alexander grading depends on the placement of all the markings, while $M$ and $S$ only on the position of the $\OO$s.\\

Let $R = \F [V_1, \ldots, V_n]$ denote the ring of  $n$-variable polynomials with $\F$ coefficients, and $\widehat{R} = \faktor{R}{\{ V_1 = 0\}}$.
These $V$ variables\footnote{We adopt here the convention of \cite{SOS}, in order to stress the difference between the endomorphisms of the complex (the $V_i$'s) and the induced map on homology, which will be denoted by $U$.} are graded endomorphisms of the complex; their function is to ``keep track'' of the $\OO$ markings in the differential.\\
We can now define at least the underlying module structure of the complexes we are going to use in what follows:
\begin{defi}
 The \emph{minus} complex $\mathrm{GC}^-(G)$ is the free $R$-module generated over $S(G)$.
The \emph{hat} complex $\widehat{\mathrm{GC}}(G)$ is the free $\widehat{R}$-module generated over $S(G)$.
Extend the gradings to the whole module by setting the behaviour of the action for the variables in the ground ring:
\begin{itemize}
\item[] $A(Vx) = A(x) - 1$
\item[] $M(V x) = M(x) - 2$
\item[] $S(V x) = S(x)$
\end{itemize}
where $V$ is any of the $V_i$.
\end{defi}
\begin{ex}\label{esempio}
In this example we are going to exhibit the generating set of the grid $G$ on the left of Figure~\ref{fig:esempio}, in the 0-th $\spinc$ structure, which we are going to denote by $S(G,0)$.
$S(G,0)$ consists of four elements (see also Figure~\ref{fig:cpxx}):$$a = \F_{\left[-\frac{1}{4},-\frac{1}{4}\right]} , b = \F_{\left[-\frac{1}{4},-\frac{1}{4}\right]}, c = \F_{\left[\frac{3}{4},-\frac{1}{4}\right]},d = \F_{\left[-\frac{5}{4},-\frac{5}{4}\right]}$$
\begin{figure}[ht]
\includegraphics[width=6cm]{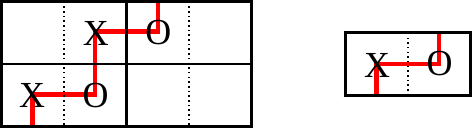}
\caption{A grid for the knot considered in Example~\ref{esempio}, and the grid obtained after a destabilization. Confront the computations in this example with Remark~\ref{rmk:semplici}.}
\label{fig:esempio}
\labellist
\pinlabel $d$ at 52 310
\pinlabel $a$ at 226 502
\pinlabel $b$ at 226 467
\pinlabel $c$ at 420 485
\pinlabel $A$ at 338 617
\pinlabel $M$ at 499 559
\pinlabel $V^2d$ at 700 153
\pinlabel $V^2c$ at 715 283
\pinlabel $V^2a$ at 580 315
\pinlabel $V^2b$ at 580 255
\pinlabel $Vd$ at 865 283
\pinlabel $Vb$ at 749 400
\pinlabel $Va$ at 749 448
\pinlabel $Vc$ at 875 424
\pinlabel $d$ at 1040 424
\pinlabel $b$ at 940 539
\pinlabel $a$ at 940 579
\pinlabel $c$ at 1040 559
\pinlabel $A$ at 1038 697
\pinlabel $V$ at 1100 633
\endlabellist
\includegraphics[width=12cm]{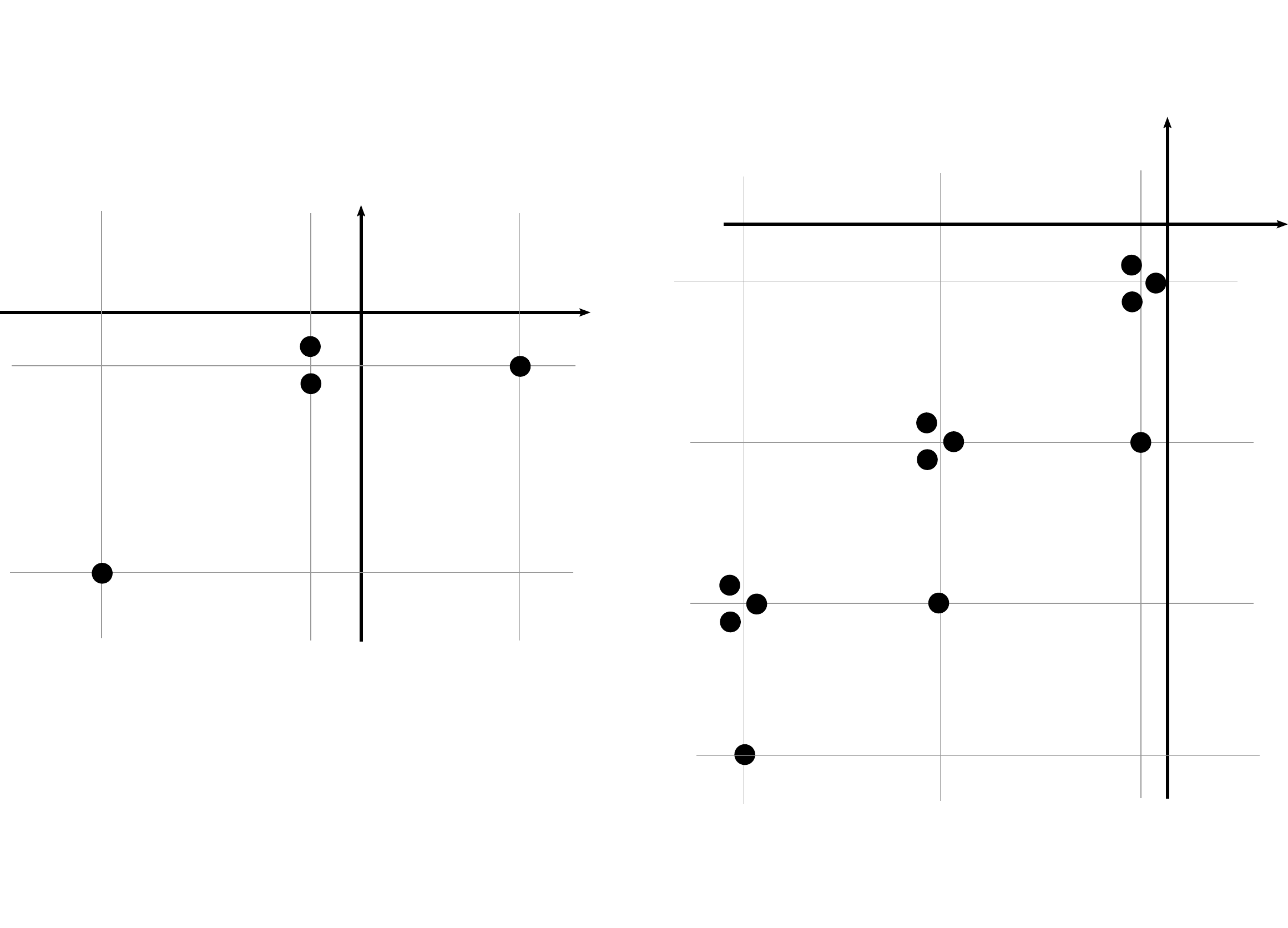}
\caption{The generating set $S(G,0)$, with the bi-degree $(M,A)$ on the axes is displayed on the left. On the right instead we have the underlying modules for the complexes $\hatc (G,0) = \minusc (G,0)$  with axes labelled by powers of the $V$ variables and Alexander degree. The dots represent generators over $\F$.}
\label{fig:cpxx}
\end{figure}
The notation $\F_{[a,b]}$ denotes a generator having $(M,A) = (a,b)$ bi-degree.
\end{ex}

\subsection{The differential}\label{ssec:diff}
As already mentioned in the introduction, grid homology hinges upon Sarkar and Wang's main result in~\cite{sarkarwang}; in their  terminology, (twisted) grid diagrams are \emph{nice} (multipointed, genus 1) Heegaard diagram for $\lp$, so the differential of $\mathrm{CFK}$ can be computed combinatorially. In this setting, the holomorphic disks of knot Floer homology's differential take the milder form of embedded rectangles on the grid.

Consider two generators $x,y \in S(G)$ having the same $\spinc$ degree;
if the permutations associated to $x$ and $y$  differ by a transposition, then the two components where the generators differ are the vertices of four immersed  rectangles $r_1, \ldots,r_4$ in the grid; the sides of the $r_i$'s are alternately arcs on the $\alpha$ and $\beta$ curves.
We can fix a \emph{direction} for such a rectangle $r$, by prescribing that $r$ \emph{goes from $x$ to $y$} if its lower left and upper right corners are on $x$ components. 
Therefore, if $x$ and $y$ are two generators in the same $\spinc$ degree, and differing by a single transposition, there are exactly two directed rectangles from $x$ to $y$.

\begin{defi}
Given a grid $G$, and $x,y \in S(G)$, call $Rect(x,y)$ the set of directed rectangles connecting $x$ to $y$; we will denote by $$Rect(G) = \bigcup_{x,y \in S(G)} Rect(x,y)$$ the set of all directed rectangles between generators in $G$. Similarly, $Rect^\circ (G)$ is going to be the set of \emph{empty rectangles}, that is those $ r \in Rect(x,y)$ such that $ Int(r) \cap x = \emptyset$.\\
Note that by assumption if $r \in Rect^\circ(x,y)$, then it does not contain any point of $y$ either, so in particular it is embedded in the torus composed by the grid.
\end{defi}
\begin{figure}
\includegraphics[width=8cm]{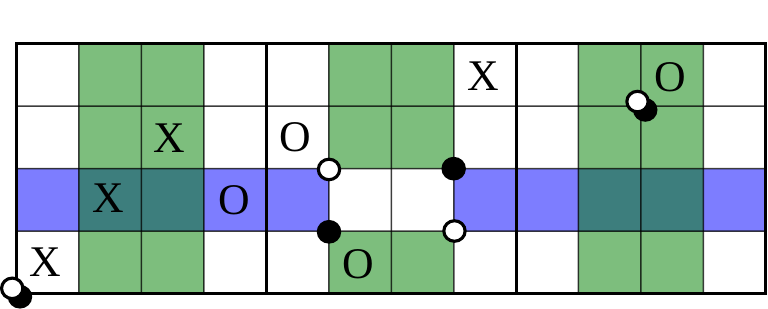}
\caption{Two directed rectangles connecting $x$ (white) to $y$ (black) in a grid of parameters (4,3,1). Only the horizontal one (in blue) is empty.}
\label{fig:rettangoli}
\end{figure}
One example of two rectangles is displayed in Figure~\ref{fig:rettangoli}.
If $x,y \in S(G)$, then $|Rect(x,y)|\in\{0,2\}$, and we will prove in Proposition~\ref{teo:grosso} that it can be non-zero only for generators in the same $\spinc$ degree which differ by a single transposition. On the other hand, with the same hypothesis on the generators, $|Rect^\circ (x,y)| \in \{0,1,2\}$.

If $r_1 \in Rect(x,y)$ and $r_2 \in Rect(y,z)$ we can consider their concatenation $r_1 \ast r_2$, which we call a \emph{polygon} connecting $x$ to $z$ through $y$.
We are going to denote by $Poly(x,z)$ the set of polygons connecting $x$ to $z$, and by $Poly^\circ(x,z)$ the empty ones.
 If $P$ is an empty rectangle or polygon, denote by $O_i(P) $
 the number of times (with multiplicity) that the $i$-\emph{th} $\mathbb{O}$ marking appears in $P$. Note that for a grid diagram of a knot in either $S^3$ or a lens space, we have $O_i(P)\in \{0,1,2\}$.
 
The differential on $\mathrm{GC}^-(G)$ and $\widehat{\mathrm{GC}}(G)$ is just going to be a count of empty rectangles, satisfying some additional constraints according to the flavour chosen.
For the two flavours of grid homology considered here (keep in mind that for $\hatc$ we set $V_1 = 0$) we keep track of the $\OO$ markings contained in the rectangles, by multiplying with the corresponding variable $V_i$:
\begin{equation}\label{eqn:differential}
\partial (x) = \sum_{y \in S(G)} \sum_{\substack{r \in Rect^\circ (x,y) \\ r \cap \XX = \emptyset} } \left(\prod_{i=1}^n V_i^{O_i(r)} \right) y
\end{equation}
\begin{prop}\label{teo:grosso}
Given a grid diagram $G$ of parameters $(n,p,q)$, the modules $\minusc (G)$ and  $\hatc (G)$ endowed with the endomorphism $\partial$ are chain complexes, that is $\partial^2 = 0$ in both cases. Moreover, $\partial$ acts on the tri-grading as follows:
\begin{enumerate}
\item $S(\partial (x)) = S(x)$
\item $M(\partial (x)) = M(x) -1$
\item $A(\partial (x)) = A(x)$
\end{enumerate}
\end{prop}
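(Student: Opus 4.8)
The plan is to establish the three grading statements first and then $\partial^2=0$, in both cases exploiting the lift $C_{p,q}$ to unroll the twisted torus into the $pn\times pn$ planar grid, where everything reduces to the bookkeeping of \cite{manolescu2009combinatorial}. For the gradings, take $r\in Rect^\circ(x,y)$ with $r\cap\XX=\emptyset$ and pass to $\widetilde x=C_{p,q}(x)$, $\widetilde y$, $\widetilde\OO$, $\widetilde\XX$, which sit in the $pn\times pn$ rectangle. The formulas \eqref{eqn:maslov} and \eqref{eqn:alex} are, up to the additive constants $d(p,q,q-1)+1$ and $\tfrac{1-n}{2}$, precisely $\tfrac1p$ (resp.\ $\tfrac1{2p}$) times the planar Maslov and Alexander expressions evaluated on $\widetilde x$, and the rectangle $r$ is modelled on the $pn\times pn$ grid by $p$ empty planar rectangles carrying $\widetilde x$ to $\widetilde y$. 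Feeding the standard planar rectangle identities into these formulas yields $M(x)-M(y)=1-2\sum_iO_i(r)$ and, since $r$ meets no $\XX$, $A(y)-A(x)=\sum_iO_i(r)$; combining with $M(V_i\cdot)=M(\cdot)-2$ and $A(V_i\cdot)=A(\cdot)-1$, the term $\big(\prod_iV_i^{O_i(r)}\big)y$ of $\partial x$ lies in Maslov degree $M(x)-1$ and Alexander degree $A(x)$, which is items (2) and (3). For item (1) I would argue straight from \eqref{eqn:spinc}: the claim is $\sum_ia_i(x)\equiv\sum_ia_i(y)\pmod p$ for such $r$, a statement about the two $p$-coordinates that change; lifting to the $pn\times pn$ grid turns the vertical offsets prescribed by the twist into ordinary accounting, and the hypothesis $r\cap\XX=\emptyset$ is exactly what forces the two changes to cancel modulo $p$. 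In particular $\partial$ respects the decomposition of $S(G)$ into $\spinc$-summands, so $\partial^2$ can be examined one summand at a time.

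For $\partial^2=0$, expanding \eqref{eqn:differential} twice gives $\partial^2(x)=\sum_z\big(\sum_{(r_1,r_2)}\prod_iV_i^{O_i(r_1)+O_i(r_2)}\big)z$, where the inner sum runs over pairs with $r_1\in Rect^\circ(x,y)$, $r_2\in Rect^\circ(y,z)$ for some $y$, both disjoint from $\XX$. I would recast each pair as its underlying domain $\psi=r_1\ast r_2$, a $2$-chain on the grid torus, and note that $O_i(\psi)=O_i(r_1)+O_i(r_2)$, so the monomial weight of a pair depends only on $\psi$ — this is where one has to keep in mind that in the twisted setting $O_i(\psi)$ may equal $2$. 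The argument then rests on classifying the domains $\psi$ that arise. If $\sigma_x\neq\sigma_z$, then $r_1$ and $r_2$ use different pairs of $\alpha$-circles, $\psi$ is a (possibly overlapping) union of two rectangles, and the usual local analysis exhibits exactly one other way to write $\psi=r_1'\ast r_2'$ as a concatenation of two empty $\XX$-avoiding rectangles, through a second intermediate generator; the two contributions have the same weight $\prod_iV_i^{O_i(\psi)}$ and cancel over $\Z_2$. If $\sigma_x=\sigma_z$, then $r_1$ and $r_2$ share the same two $\alpha$-circles and the same two $\beta$-circles, so $\psi$ is confined to the annular region between them: either $\psi$ closes up into a genuine annulus — a strip running around between two $\alpha$-circles, or, because of the twist, the $p$-fold spiral strip between two $\beta$-circles — in which case the marking rule forces an $\XX$ inside $\psi$, hence inside $r_1$ or $r_2$, so no such pair occurs (in particular there is no leftover $z=x$ term); or $\psi$ wraps once around and is offset by the twist, so $z\neq x$, and $\psi$ again has a unique alternative decomposition and cancels. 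Hence $\partial^2(x)=0$.

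The genuine difficulty is precisely this domain classification in the twisted grid. Because each $\beta$-circle spirals $p$ times around the torus, a domain glued from two empty rectangles can wind in the $\alpha$-direction — picking up a horizontal shift each time — in ways with no counterpart in an $S^3$ grid, and one has to be certain that (i) for every $\psi$ that is not a closed annulus the promised alternative decomposition into two empty $\XX$-avoiding rectangles exists and is \emph{unique}, and (ii) the closed annuli are exactly the ones listed, i.e.\ the twist never produces an $\XX$-free closed region realisable as a sum of two empty rectangles. I would organise this by sorting $\psi$ according to how the supports of $r_1$ and $r_2$ meet the two relevant $\alpha$-bands and the two relevant $\beta$-bands — a finite check of local pictures, as in \cite{BGH} — and using $\gcd(p,q)=1$ to control how the spiralling bands close up. Granting that, the rest is formal: the grading identities come from the $C_{p,q}$-lift, and the compatibility with the $\spinc$-splitting was already noted above.
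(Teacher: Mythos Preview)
Your overall scheme matches the paper's: both use the $C_{p,q}$-lift for items (2) and (3), and both prove $\partial^2=0$ by classifying the domains $\psi=r_1\ast r_2$ that arise. The paper organises the classification by $M=|x\setminus(x\cap z)|\in\{0,\ldots,4\}$ rather than by whether $\sigma_x=\sigma_z$, and it invokes one shortcut you omit --- if $\psi$ misses some $\alpha$-circle one cuts the torus open there and defers to the $S^3$ analysis of \cite{SOS}, so only polygons meeting every $\alpha$-circle need a lens-space argument. Your dichotomy $\sigma_x=\sigma_z$ versus $\sigma_x\neq\sigma_z$ corresponds to the paper's $M\in\{0,2\}$ versus $M\in\{3,4\}$ and is a legitimate reorganisation; note however that your assertion ``$r_1$ and $r_2$ use different pairs of $\alpha$-circles'' is literally false in the $M=3$ case, where the two rectangles share exactly one $\alpha$-circle and one $\beta$-circle.

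There is one genuine error, in your argument for item~(1). You claim that ``the hypothesis $r\cap\XX=\emptyset$ is exactly what forces the two changes to cancel modulo $p$''. This is incorrect: the $\XX$-markings play no role in the $\spinc$-grading --- equation~\eqref{eqn:spinc} involves only the $\OO$'s --- and the cancellation in fact holds for \emph{every} rectangle in $Rect(x,y)$, empty or not, $\XX$-avoiding or not. The correct reason, which the paper gives, is purely the geometry of the rectangle: if the two $p$-coordinates in which $x$ and $y$ differ are $a_i^x=k$ and $a_j^x=l$, then the shape of $r$ forces $a_i^y=k+t$ and $a_j^y=l-t$ (mod $p$) for a common shift $t$, so $\sum_i a_i$ is unchanged. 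Your appeal to the $C_{p,q}$-lift is also out of place here, since $S$ is defined directly on the twisted grid without reference to $C_{p,q}$. The conclusion you draw is right, but the reasoning as written would mislead you the moment you need the $\spinc$-invariance of rectangles that \emph{do} meet $\XX$ (as one does, for instance, when setting up the filtered theory or when defining signs on all of $Rect(G)$ in Section~\ref{sec:zcoeff}).
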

\begin{rmk}
This Proposition is implicit in \cite{BGH}, and it can be seen as a direct consequence of Theorem~$1.1$ therein; however some of the considerations in this proof will be useful in the following section. Moreover this proof will rely only on combinatorics, showing that the result can be obtained without any reference to the holomorphic theory of \cite{holdisknot} and \cite{rasmussenknot}.
\end{rmk}
\begin{proof}
We begin by examining the behaviour of the degrees under the differential; condition $(1)$ is easy to prove: by Equation~\eqref{eqn:spinc} the only relevant part of a generator $x$ for the computation of $S(x)$ is given by its $p$-coordinates. If $y$ appears in the differential of $x$, call $a_i^x,a_i^y, a_j^x$ and $a_j^y$ the $p$-coordinates where $x$ and $y$ differ. If $a_i^x = k$ and $a_j^x = l$, with  $k,l \in \zetap$, then (since by hypothesis they are connected by a rectangle) $a_i^y = k + t$ and $a_j^y = l-t$ modulo $p$ for some $t \in \{0,\ldots, p-1\}$ (as shown in Figure~\ref{fig:rectspin});  so $S(x)=S(y)$.\\
\begin{figure}[ht]
\includegraphics[width=7cm]{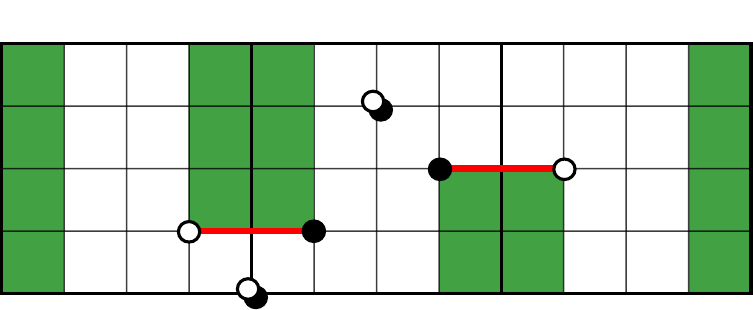}
\caption{The non equal $p$-coordinates of the generators compensate each other.}
 \label{fig:rectspin}
\end{figure}
Let's now see what happens for the other two degrees; if $x$ and $y$ are generators in $G$ connected by an empty rectangle $r$, directed from $x$ to $y$, then their lifts $\widetilde{x}$ and $\widetilde{y}$ will differ in $2p$ positions, according to the pattern suggested in Figure~\ref{fig:IXX}.
\begin{figure}[ht]
\includegraphics[width=7cm]{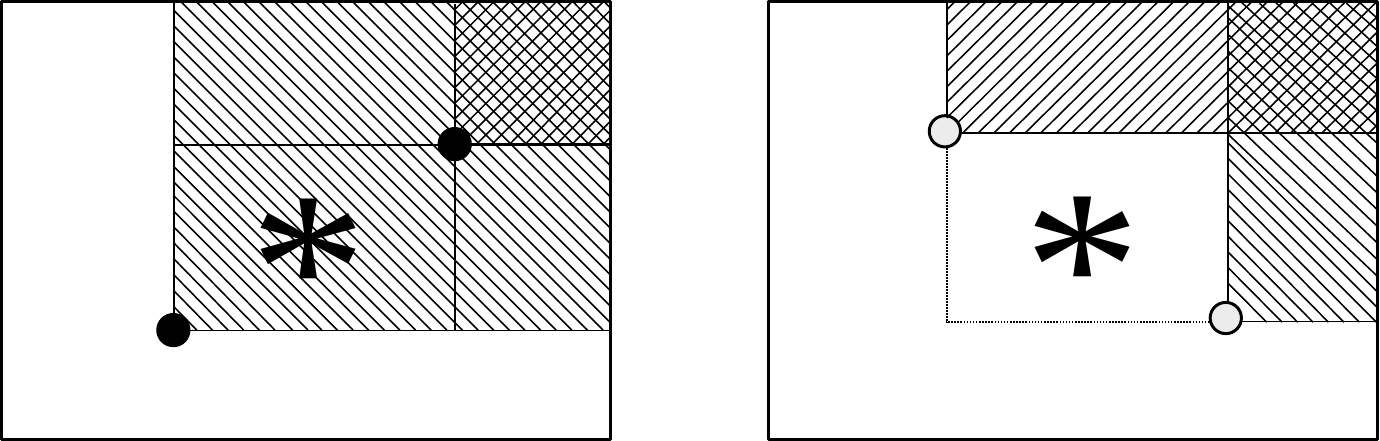}
 \caption{The difference between the functions $\iab{x}{*}$ and $\iab{y}{*}$ for two generators (in black and white respectively) whose permutations differ by a transposition. The shading indicates the upper-right regions considered in the computation of the function $\mathcal{I}$.}
 \label{fig:IXX}
\end{figure}
This implies that the corresponding $\mathcal{I}$ function will change accordingly:
$$\iab{\widetilde{x}}{\widetilde{x}} = \iab{\widetilde{y}}{\widetilde{y}} +p$$ 
$$\iab{\widetilde{x}}{\widetilde{\OO}} = \iab{\widetilde{y}}{\widetilde{\OO}} + p \sum_{i=1}^n O_i (r)$$
$$\iab{\widetilde{\OO}}{\widetilde{x}} = \iab{\widetilde{\OO}}{\widetilde{y}} + p \sum_{i=1}^n O_i (r)$$
Moreover, the same result holds with $\XX$ markings instead of $\OO$'s.
Then from Equation~\eqref{eqn:differential} we get for $(2)$ and $(3)$ respectively:
 $$\displaystyle M(\partial (x)) = \sum_{y \in S(G)} \sum_{\substack{r \in Rect^\circ (x,y) \\ r \cap \XX = \emptyset}} \left( \sum_{i=1}^n -2 O_i (r) \right) M(y)$$
 $$\displaystyle A(\partial (x)) = \sum_{y \in S(G)} \sum_{\substack{r \in Rect^\circ (x,y) \\ r \cap \XX = \emptyset}} \left(\sum_{i=1}^n - O_i (r)\right) A(y)$$
A substitution using Equations~\eqref{eqn:maslov} and \eqref{eqn:alex} defining the Maslov and Alexander degrees yields $(2)$ and $(3)$.\\

We are left to show that $ \partial^2 =0$; we thus need to study the possible decompositions in rectangles of polygons connecting two generators.\\
 We will prove the result for the minus flavoured complex, since the analogous result for the hat version follows immediately.
From Equation~\eqref{eqn:differential} we can compute 
\begin{equation}
\partial^2 (x) = \sum_{z \in S(G)} \sum_{\substack{\psi \in Poly^\circ (x, z) \\ \psi \cap \XX = \emptyset}}   N(\psi) \left( \prod_{i=1}^{n} {V_i}^{O_i (\psi)} \right) z
\end{equation}
where $\psi$ is a polygon connecting $x$ to $z$, and $N(\psi)$ is the number of possible ways of writing $\psi$ as the composition of two empty rectangles $r_1 \ast r_2$, with $r_1 \in Rect^\circ (x,y)$ and $r_2 \in Rect^\circ (y,z)$ for some $y\in S(G)$.

Note that a polygon $P$ connecting two generators is empty if and only if both the rectangles $P$ is made of are empty as well.
In order to complete the proof we need to show that $ N(\psi) \equiv 0 \;(\mbox{mod}\;2)$, \emph{i.e.}~there is an even number of ways (in fact exactly $2$) to decompose into rectangles a fixed $\psi$ that appears in the squared differential $\partial^2$.
We can also take advantage of the proof in \cite[Lemma $4.4.6$]{SOS} to reduce the number of cases to examine; as a matter of fact, if a polygon $\psi$
does not cross one of the $\alpha$ curves, we can cut the torus open along it, and think of the polygon as living in a portion of an $np\times np$ grid for $S^3$.
Thus, we only need to worry about polygons that intersect all $\alpha$ circles.\\

There are then four possibilities to be considered a priori, according to the quantity $M= |x\setminus (x \cap z)| \in \{ 0, \ldots, 4\}$, as schematically shown in Figure~\ref{fig:pallini}.
\begin{figure}[ht]
\labellist
\pinlabel $M=0$ at 65 10
\pinlabel $M=2$ at 320 10
\pinlabel $M=3$ at 570 10
\pinlabel $M=4$ at 825 10
\endlabellist

\includegraphics[width=8cm]{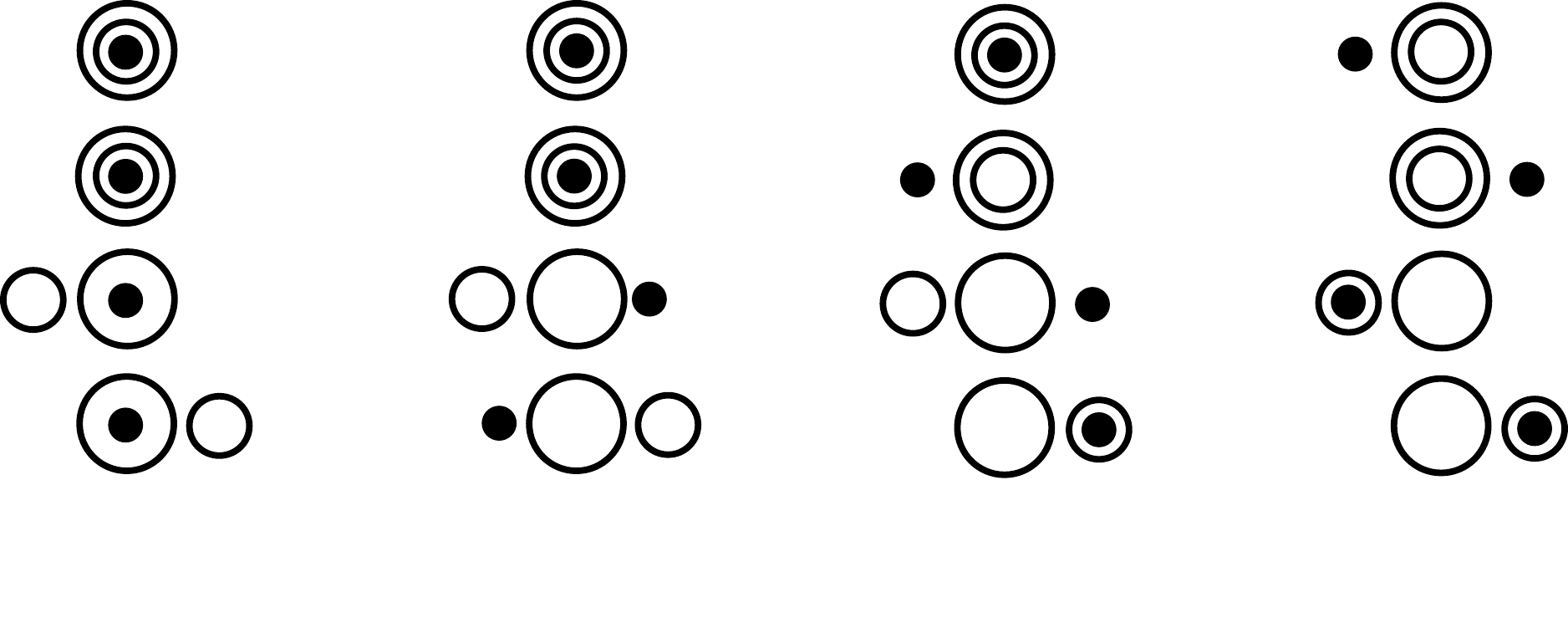}
\caption{A representation of the possibilities for $M$ (in a grid of dimension $4$). The circles correspond (from larger to smaller) to the components of generators $x,y$ and $z$. Two circles are concentric whenever the corresponding components of the generators coincide, and the $i$-th row shows the relative position for the components of the generators lying on $\alpha_i$.}
 \label{fig:pallini}
\end{figure}
If $M = 0$, that is $x = z$, the only possible polygons are \emph{thin} rectangles, called $\alpha$ and $\beta$ degenerations or simply $\alpha\slash \beta$-strips (see \emph{e.g.}~Figure~\ref{fig:alphadeg}).
These are strips of respectively height or width $1$ (otherwise they would not be empty). We are not concerned with these strips, since  each of them contains exactly one $\XX$ marking, hence they do not contribute to the differential.

This is not true for the \emph{filtered} versions of these complexes (see \cite[Ch.~13]{SOS}). Nonetheless, the polygons that cannot be split in two different ways cancel each other out nicely in that case as well. As an aside, we note here that there is only one way to decompose such a strip into two rectangles (one starting from $x$, and one arriving to it). 

The case $M = 1$ can be dismissed too,  since rectangles only connect generators which differ in exactly two points\footnote{And a product of two nontrivial and distinct transpositions is never a transposition.}.

If $M = 4$, that is, the corners of the two rectangles are all distinct, we can apply the same approach of \cite[Ch.~4]{SOS};
there are two ways of counting them, as shown in Figure~\ref{fig:rettangolidis}.
\begin{figure}[ht]
\includegraphics[width=12cm]{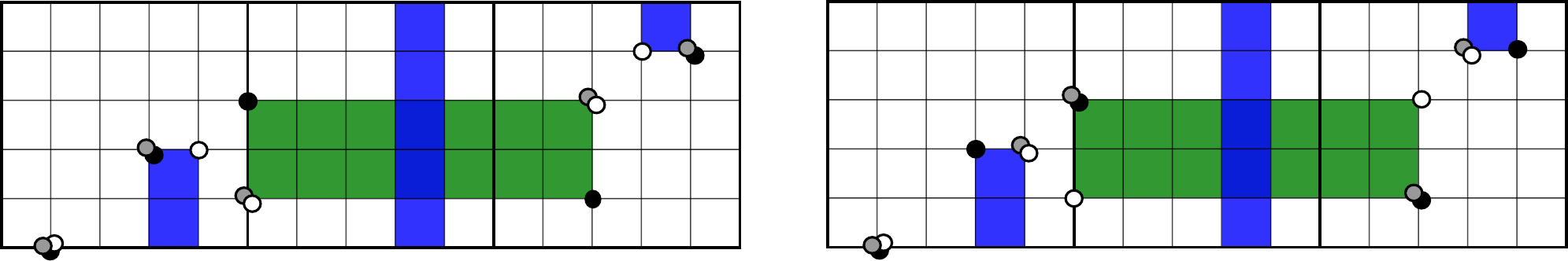}
\caption{When $M=4$ we can consider the two rectangles (from white to black) in either order, by choosing a suitable intermediate generator $y$ (gray).}
\label{fig:rettangolidis}
\end{figure}
Basically, the two decompositions correspond to taking the two rectangles in either order. We remark that one rectangle might wrap around the other, but the number of decompositions does not depend on this wrapping.

The  case $M=2$ needs a bit more care, since it has no $S^3$ counterpart (see \cite[Ch.~4]{SOS}). In this case, the two rectangles must share part of 2 edges.
There are two possibilities to consider:
\begin{enumerate}
\item the rectangle starting from $x$ does not cross all $\alpha$ curves. Up to vertical/horizontal translations, it can be placed in such a way that it does not intersect the boundary of the planar grid.
\item the rectangle starting from $x$ intersects all the $\alpha$ curves at least once.
\end{enumerate}
Either way, the second rectangle joining the intermediate generator to $z$ must end and start on the same $\alpha$ curves of the first rectangle; the configurations in both cases are shown Figure~\ref{fig:iugualea2}, together with their decompositions.\\

Lastly, if $M=3$ we can again distinguish  two possibilities as in the previous case; the combinatorially inequivalent configurations are shown in Figure~\ref{fig:iugualea3}, together with their two decompositions.
\begin{figure}[ht]
\includegraphics[width=12cm]{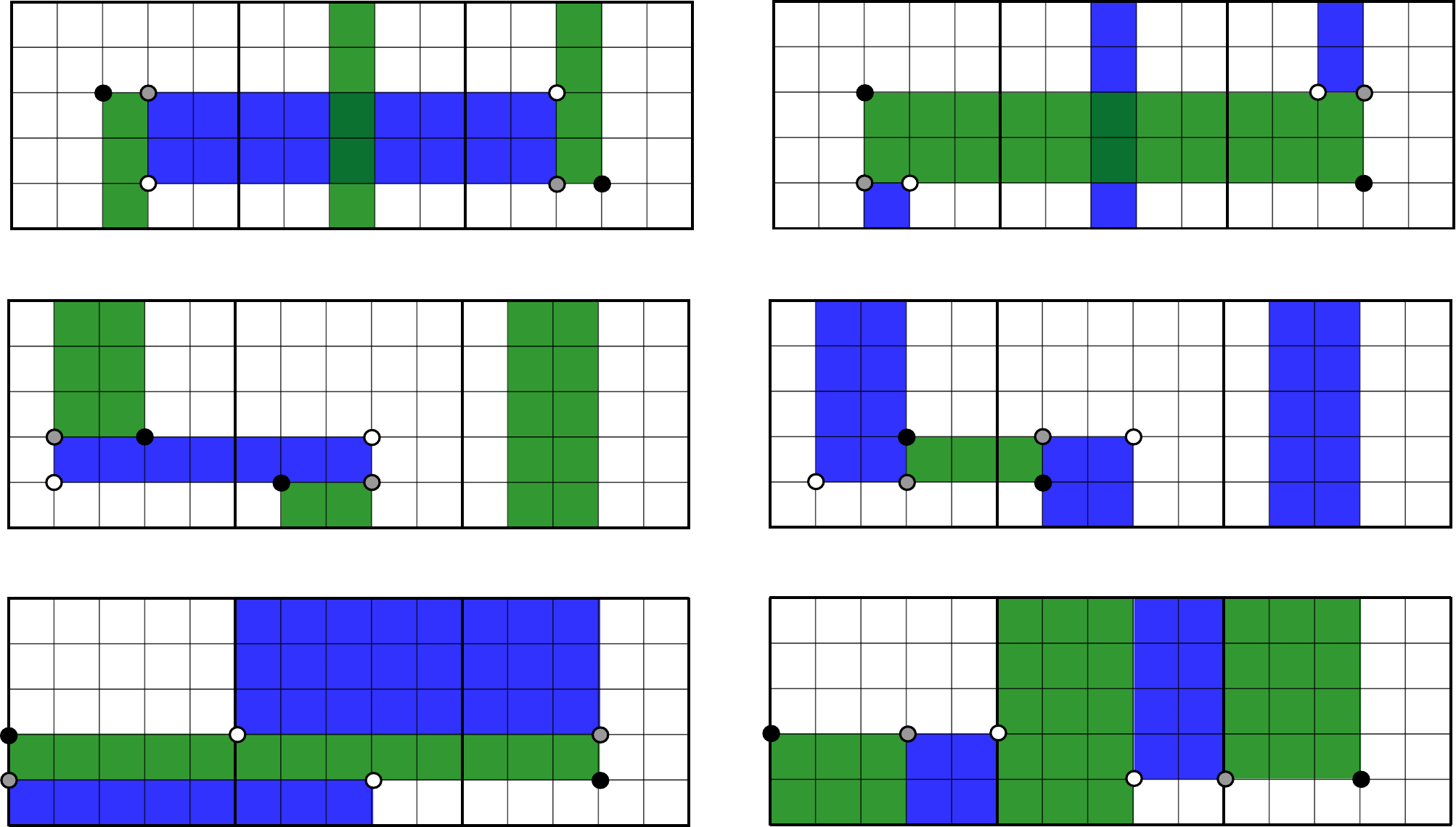}
\caption{Relevant combinatorial possibilities for $M=2$ on a grid for $L(3,1)$. On each row, the two possible decompositions are shown. Again we adopt the convention $x,y,z$ = white, grey and black dots, showing only the appropriate components.}
\label{fig:iugualea2}
\end{figure}
\begin{figure}[ht]
\includegraphics[width=11cm]{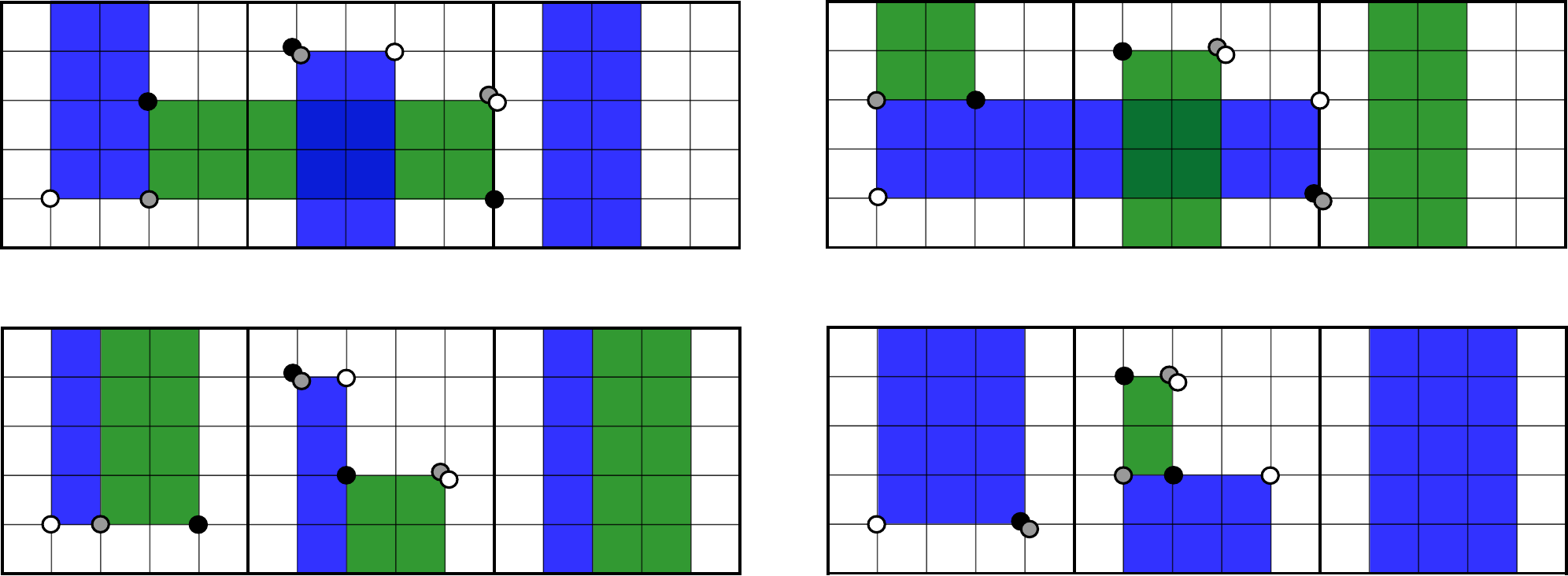}
\caption{Some configurations for the $M=3$ case. The complete combinatorial classification up to wrapping is presented in Figure~\ref{fig:combinatorialsign}.}
\label{fig:iugualea3}
\end{figure}
\end{proof}
\begin{ex}
We continue here the computations of the grid from Example~\ref{esempio}: we can now complete the picture by adding the differentials and computing the various homologies.
We have (\emph{cf}.~Figure~\ref{fig:esempiodiff}):
\begin{center}
\begin{itemize}
\item[] $\partial (a) = \partial (b) = 0$
\item[] $\partial (c) = a+b$
\item[] $\partial (d) =  V_1a + V_2 b$
\end{itemize}
\end{center}
\begin{figure}[ht]
\labellist
\pinlabel $V^2d$ at 125 50
\pinlabel $V^2d$ at 825 50

\pinlabel $Vd$ at 285 180
\pinlabel $Vd$ at 975 180

\pinlabel $V^2c$ at 130 180
\pinlabel $V^2c$ at 820 180

\pinlabel $V^2a$ at 0 210
\pinlabel $V^2a$ at 685 210

\pinlabel $V^2b$ at 0 160
\pinlabel $V^2b$ at 685 160

\pinlabel $d$ at 465 320
\pinlabel $d$ at 1150 320

\pinlabel $c$ at 465 450
\pinlabel $c$ at 1150 450

\pinlabel $Vc$ at 295 317
\pinlabel $Vc$ at 985 317

\pinlabel $Va$ at 180 347
\pinlabel $Va$ at 860 347

\pinlabel $Vb$ at 180 295
\pinlabel $Vb$ at 860 295

\pinlabel $a$ at 370 475
\pinlabel $a$ at 1060 475

\pinlabel $b$ at 370 435
\pinlabel $b$ at 1060 435

\pinlabel $A$ at 468 600
\pinlabel $V$ at 1230 535

\pinlabel $A$ at 1153 600
\pinlabel $V$ at 535 535
\endlabellist
\includegraphics[width=12cm]{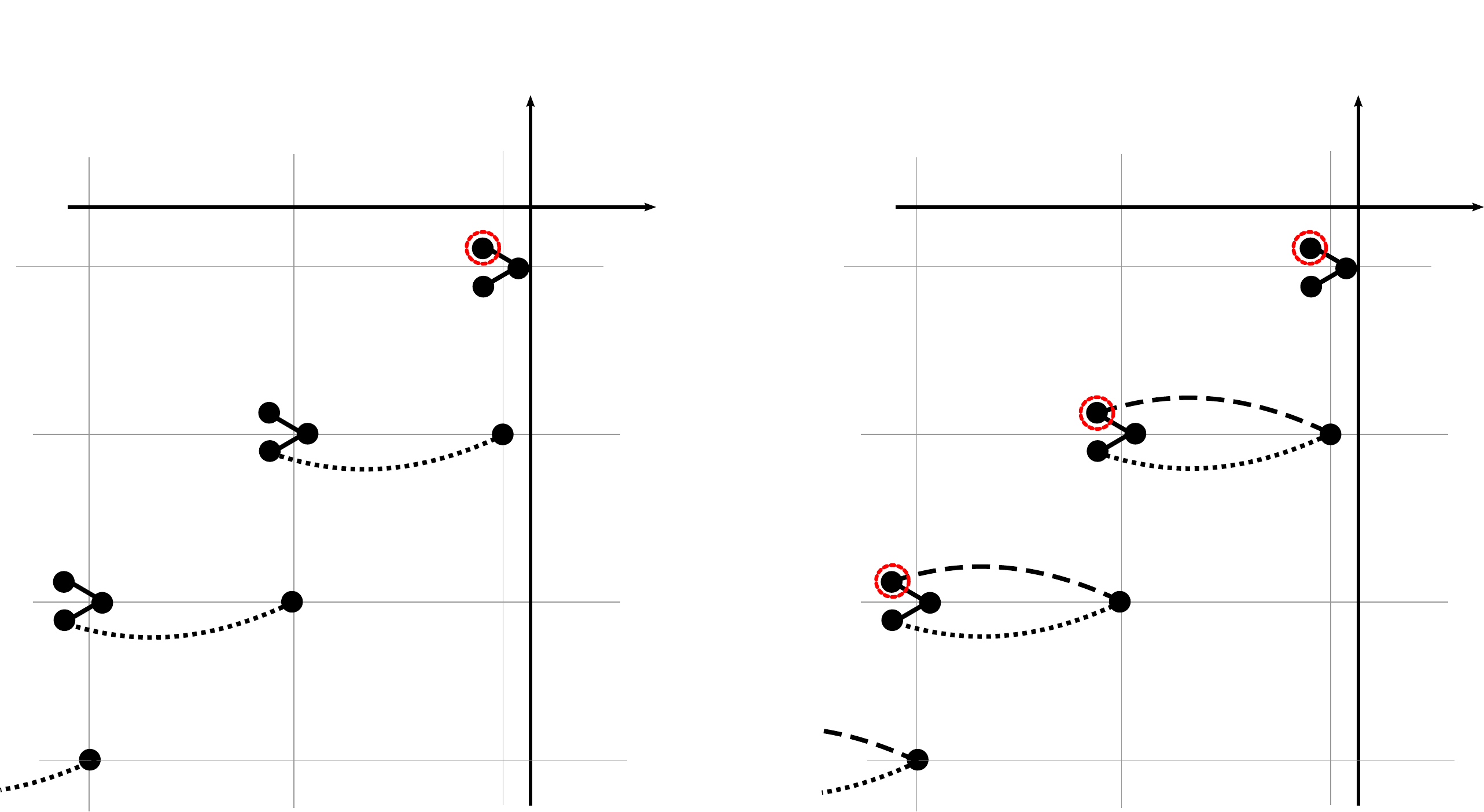}
\caption{On the left the complex $\hatc (G,0)$ and on the right the complex $\minusc (G,0)$, for the grid $G$ from Example~\ref{esempio};  the dotted line corresponds to multiplication by $V_2$, and the dashed one to multiplication by $V_1$. Non-trivial elements in homology are circled in red.}
\label{fig:esempiodiff}
 \end{figure}
It is then an easy task to compute the grid homologies in the two flavours:
$$\hath (G,0) = \F \left<a \right> \cong \F_{\left[-\frac{1}{4},-\frac{1}{4}\right]}$$
$$\minush (G,0) = \F[V_1] \left<a \right> \cong \F[U]_{\left[-\frac{1}{4},-\frac{1}{4}\right]}$$
\end{ex}

\subsection{The homologies}
From the definitions given up to now it might seem strange that the homology of such a complex could be an invariant of  the smooth isotopy type of a knot, since even the ground ring depends on the dimension of a grid representing it; Theorem \ref{teo:equiv} below ensures however that $\minush$ and $\hath$ are quasi-isomorphic to a finitely generated $\F [U]$ and $\F$-modules respectively (see \emph{e.g.}~Figure~\ref{fig:esempiodiff}). The algebraic reason behind this is the content of the following Proposition:
\begin{prop}
Let $G$ be a grid of parameters $(n, p,q)$ for a knot $K$. Then the action of multiplication by $V_i$ on the complex $\minusc (G)$ is quasi-isomorphic to multiplication by $V_j$.
\end{prop}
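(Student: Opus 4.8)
The plan is to produce, for each pair of indices, an explicit chain homotopy between multiplication by $V_i$ and multiplication by $V_j$; this is the content of ``quasi isomorphic'' here, and in particular it forces the two endomorphisms to induce the same map on $\minush(G)$ (and to have homotopy equivalent mapping cones). Since being chain homotopic is transitive, it suffices to treat ``adjacent'' pairs. Call $\OO_i$ and $\OO_j$ \emph{linked} if some marking $\XX_k$ lies on the same row as one of them and on the same column as the other. The segments joining the $\XX$'s and $\OO$'s trace out the knot $K$: following $K$ one alternately traverses a column, from an $\OO$ to an $\XX$, and then a row, from that $\XX$ to an $\OO$, so the $\OO$ reached immediately before an $\XX_k$ and the $\OO$ reached immediately after it are linked through $\XX_k$. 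As $K$ is connected it visits all $n$ of the $\OO$-markings, so the transitive closure of ``linked'' consists of a single class; hence it is enough to prove that $\OO_i$ linked to $\OO_j$ through $\XX_k$ implies $V_i$ and $V_j$ chain homotopic.

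First I would fix such an $\XX_k$, with $\OO_i$ on its row and $\OO_j$ on its column, and define $\funct{H_k}{\minusc(G)}{\minusc(G)}$ on generators by
\[
H_k(x) \;=\; \sum_{y \in S(G)} \; \sum_{\substack{r \in Rect^\circ(x,y) \\ r \cap \XX = \{\XX_k\}}} \left( \prod_{l=1}^{n} V_l^{O_l(r)} \right) y ,
\]
i.e. the count defining $\partial$ except that rectangles are now required to \emph{contain} $\XX_k$ while avoiding every other $\XX$. The grading bookkeeping from the proof of Proposition \ref{teo:grosso} applies verbatim and shows $H_k$ is homogeneous of Maslov degree $-1$ and Alexander degree $-1$ and preserves the $\spinc$ grading, as befits a homotopy between the two multiplications $V_i, V_j$ (each of Maslov degree $-2$ and Alexander degree $-1$, and $\spinc$-preserving). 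The identity to establish is then
\[
\partial H_k + H_k \partial \;=\; V_i + V_j
\]
as endomorphisms of $\minusc(G)$, over $\F = \Z_2$.

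Expanding the left-hand side as in Proposition \ref{teo:grosso}, the coefficient of $z$ in $(\partial H_k + H_k\partial)(x)$ is $\sum_{\psi \in Poly^\circ(x,z)} N_k(\psi)\, \prod_l V_l^{O_l(\psi)}$, where $N_k(\psi)$ counts the decompositions $\psi = r_1 \ast r_2$ into empty rectangles in which exactly one of $r_1, r_2$ contains $\XX_k$ and neither contains any other $\XX$. Since empty rectangles are embedded, each meets $\XX_k$ at most once, and one verifies that admissibility depends on $\psi$ alone: every splitting is admissible precisely when $\psi$ contains $\XX_k$ with multiplicity $1$ and no other $\XX$. For such $\psi$ one has $N_k(\psi) = N(\psi)$, the total number of rectangle decompositions analyzed in Proposition \ref{teo:grosso}, which is even whenever $z \neq x$ by the case analysis carried out there (including the wrappings peculiar to the twisted grid); hence those terms vanish modulo $2$. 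The only surviving contributions come from the degenerate situation $z = x$: the height $1$ strip along the row of $\XX_k$ --- which contains exactly the markings $\XX_k$ and $\OO_i$ and splits in a unique way --- contributes $V_i\, x$, and the width $1$ strip along the column of $\XX_k$ likewise contributes $V_j\, x$. Summing gives $\partial H_k + H_k \partial = V_i + V_j$, which completes the reduction. The main obstacle is keeping this bookkeeping honest in the presence of the twisted identifications: one must rule out extra ``fat'' annular domains wrapping several times around the torus that might carry an odd decomposition count, and check that the only thin strips through $\XX_k$ are the single row and single column --- but this is precisely the polygon classification already carried out for $\partial^2 = 0$, reused here with the role of the $\XX$-markings adjusted.
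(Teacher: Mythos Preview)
Your argument is correct and is precisely the standard one: the paper itself does not give a proof but simply refers to \cite[Ch.~4]{SOS}, and the homotopy you construct (counting empty rectangles passing through a single designated $\XX_k$) together with the connectedness argument linking all $\OO$-markings is exactly the content of that reference. Your handling of the degenerate $z=x$ case and the observation that the polygon classification from Proposition~\ref{teo:grosso} carries over unchanged are both sound.
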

\begin{proof}
See \cite[Ch.~4]{SOS}.
\end{proof}
\begin{thm}[Thm.~1.1 in \cite{BGH}]\label{teo:equiv}
The homologies  $$ \minush (G)= H_*\left(\minusc (G) , \partial \right)$$ and $$\hath (G) = H_*\left(\hatc (G) , \partial \right)$$ regarded as $(\Q,\Q,\zetap)$ graded modules over the appropriate ring are invariants of the knot $(\lp , K)$.
Moreover, $\left(\minusc (G) , \partial \right)$ is quasi-isomorphic to a finitely generated $\F [U]$ module, where $U$ acts as any of the $V_i$, and $(\hatc , \partial )$ is quasi-isomorphic to a finitely generated $\F$ module.
\end{thm}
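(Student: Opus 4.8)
The plan is to deduce the theorem from two facts already available: the statement, recalled in the Cromwell--Baker--Hedden--Grigsby Proposition above, that any two grids for a fixed knot $(\lp,K)$ are joined by a finite chain of translations, commutations and (de)stabilizations; and the chain homotopy between multiplication by $V_i$ and multiplication by $V_j$ on $\minusc(G)$ furnished by the Proposition immediately preceding this Theorem. For invariance it then suffices to attach to each elementary move a quasi-isomorphism of the associated complexes over the relevant ground ring, compatibly with the $(\Q,\Q,\zetap)$-grading up to a global shift, and to compose these along a chain of moves relating two given grids. A translation merely relabels the $\alpha$ and $\beta$ circles and permutes the markings, hence induces a tautological isomorphism; the two substantive cases are commutations and (de)stabilizations, which I would treat in close parallel with the $S^3$ arguments of \cite{manolescu2009combinatorial,SOS}. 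This is exactly Theorem~1.1 of \cite{BGH}; the point of re-deriving it would be to stay combinatorial throughout.

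For a commutation I would define a map $\minusc(G)\to\minusc(G')$ by replacing, in the differential of equation~\ref{eqn:differential}, the empty rectangles by empty \emph{pentagons} straddling the commutation region, still weighted by $\prod_i V_i^{O_i(\,\cdot\,)}$ and still forbidden to contain an $\XX$; a candidate homotopy inverse is the pentagon count run in the opposite direction, and the two composites are shown to be chain homotopic to the identity by a hexagon count. Each of these identities reduces to a finite classification of how a composite polygon decomposes, entirely analogous to the proof of Proposition~\ref{teo:grosso}; the only genuinely new feature is that a pentagon or hexagon may wrap around the twisted torus, and the enumeration of such wrapping polygons is exactly the bookkeeping already assembled in that proof (Figures~\ref{fig:iugualea2}--\ref{fig:iugualea3}). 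For a stabilization, with $G'$ the larger grid, I would use the intersection point created near the stabilized corner to split $S(G')$ into two subsets; with respect to this splitting $\partial_{G'}$ becomes block upper triangular with one off-diagonal block equal to the identity on a bijection of generators (realized by the small empty square at the corner, which carries no $\OO$), so the cancellation lemma removes that acyclic summand and leaves a complex isomorphic to $\minusc(G)$ up to a grading shift, for each of the eight types of stabilization. Imposing $V_1=0$ throughout yields the corresponding statements for $\hatc$. I expect the verification of the pentagon and hexagon relations \emph{in the presence of wrapping} to be the main obstacle, since that is the only place where the twisted setting departs substantively from \cite{SOS}.

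Granting invariance, the finiteness claims are formal. As $S(G)$ is finite, $\minusc(G)$ is a finitely generated free module over the Noetherian ring $R=\F[V_1,\dots,V_n]$, so $\ker\partial$ is a finitely generated $R$-submodule and therefore $\minush(G)=\ker\partial/\operatorname{im}\partial$ is a finitely generated $R$-module. By the preceding Proposition the endomorphisms of $\minusc(G)$ given by multiplication by $V_i$ and by $V_j$ are chain homotopic (through an $R$-linear homotopy, in the construction of \cite{SOS}), so they induce one and the same endomorphism of $\minush(G)$; hence the $R$-action on $\minush(G)$ factors through the surjection $R\twoheadrightarrow R/(V_i-V_j : i,j)\cong\F[U]$, and a finite set of $R$-generators is already a finite set of $\F[U]$-generators. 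Thus $\minush(G)$ is finitely generated over the PID $\F[U]$, with $U$ acting as any $V_i$, which is the content of the final assertion in the minus flavour. Since that homotopy is $R$-linear it descends to $\hatc(G)=\minusc(G)/(V_1)$, so the same argument gives that $\hath(G)$ is finitely generated over $\widehat R=\F[V_2,\dots,V_n]$ and that all the $V_i$ act on it by a single endomorphism; but $V_1=0$ on $\hatc(G)$ by construction, so that endomorphism is zero, the $\widehat R$-action on $\hath(G)$ factors through $\widehat R\twoheadrightarrow\widehat R/(V_2,\dots,V_n)\cong\F$, and a finite set of $\widehat R$-generators becomes a finite $\F$-basis. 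Hence $\hath(G)$ is a finite-dimensional $\F$-vector space.
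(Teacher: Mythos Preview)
Your proposal is correct in spirit but takes a genuinely different route from the paper. The paper's own proof consists of a single sentence: it explicitly declines to adapt the combinatorial invariance proof of \cite{SOS} to the lens space setting and instead appeals directly to the main result of \cite{BGH}, which identifies the grid complexes with $CFK^-$ and $\widehat{CFK}$ and thereby imports invariance from the holomorphic theory. You do the opposite: you outline precisely the combinatorial argument the author chose to skip, building pentagon and hexagon maps for commutations and a cancellation argument for stabilizations, and then you give a clean module-theoretic derivation of the finiteness statements from the $V_i\simeq V_j$ homotopy. Your approach buys self-containment and is more in keeping with the paper's stated combinatorial ethos; the paper's approach buys brevity and avoids duplicating \cite{BGH}.

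One place where your sketch is thinner than it needs to be is the stabilization step for the minus flavour. The description ``block upper triangular with one off-diagonal block equal to the identity, realized by the small empty square at the corner'' is essentially the tilde argument; for $\minusc$ the stabilized complex acquires an extra variable $V_{n+1}$ and the cancellation does not by itself identify the quotient with $\minusc(G)$ over $\F[V_1,\dots,V_n]$. The standard route (as in \cite{SOS} or \cite{manolescu2007combinatorial}) passes through a mapping cone description of the stabilized complex and then uses the very $V_i\simeq V_j$ homotopy you invoke later to eliminate the spurious variable. You already have that ingredient in hand, so this is a gap in exposition rather than in strategy. Your treatment of the finiteness claims in the last paragraph is entirely correct and is more explicit than anything the paper states.
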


Due to this theorem, we will sometimes make the notational abuse of writing $\hath (\lp , K)$ instead of $\hath (G)$, $G$ being a grid of parameters $(n,p,q)$ representing $K$.
\begin{rmk}
Since the differential preserves the decomposition of the complex in $\spinc$ structures (Proposition~\ref{teo:grosso}), we can write 
$$\minush (\lp,K) = \bigoplus_{s\in \zetap} \minush (\lp,K,s)$$
$$\hath (\lp,K) = \bigoplus_{s\in \zetap} \hath (\lp,K,s)$$
and according to Proposition~\ref{teo:grosso} the endomorphism $U$ induced in homology by any of the $V_i$ acts as
$$\funct{U}{\minush_m (\lp,K,s,a)}{\minush_{m-2} (\lp,K,s,a-1)}$$
where $\minush_m (\lp,K,s,a)$ is the homology in tri-grading $(m,a,s)    \in (\Q,\Q,\zetap)$.
\end{rmk}
We can finally state the main result from \cite{BGH}:
\begin{thm}[Thm.~1.1 of \cite{BGH}]
Let $G$  be a grid for a knot $K \subset \lp$. There is a graded isomorphism of $\F[U]$ and respectively $\F$  tri-graded modules:
$$HFK^- (\lp,K) \cong \minush (G)$$
$$\widehat{HFK}(\lp,K) \cong \hath(G)$$
\end{thm}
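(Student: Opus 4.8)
The plan is to follow the strategy of Manolescu--Ozsv\'ath--Sarkar \cite{manolescu2009combinatorial, SOS}, adapted to lens spaces as in \cite{BGH}. The starting point is that a twisted toroidal grid $G$ of parameters $(n,p,q)$ equipped with the $\XX$ and $\OO$ markings is a genus--$1$ multipointed Heegaard diagram $\mathcal{H}$ for $(\lp,K)$, and that every region of $\mathcal{H}\setminus(\alpha\cup\beta)$ is one of the little squares of the grid, hence a bigon or a rectangle. Thus $\mathcal{H}$ is a \emph{nice} diagram in the sense of Sarkar--Wang \cite{sarkarwang}: the differential on the associated multipointed knot Floer complex counts only embedded bigons and rectangles whose interior is disjoint from the generator, each weighted by the product of the $V_i$ over the $\OO$'s it covers, and those covering an $\XX$ are killed in the flavors we consider. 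The bigons are exactly the $\alpha$-- and $\beta$--degenerations appearing in the proof of Proposition \ref{teo:grosso}, each of which contains one $\XX$ (and one $\OO$), so they do not survive, and what remains is precisely the count of empty $\XX$--free rectangles of equation \ref{eqn:differential}. Spelling this out yields, over $\F=\Z_2$, an isomorphism of $R$--modules (resp. $\widehat R$--modules) between the Sarkar--Wang model of $CFK^-(\mathcal{H})$ (resp. $\widehat{CFK}(\mathcal{H})$) and $\minusc(G)$ (resp. $\hatc(G)$) which intertwines the two differentials.

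Next I would reconcile the multivariable ground ring $R=\F[V_1,\dots,V_n]$ with the single variable ring $\F[U]$ appearing in $HFK^-$. By the Proposition preceding Theorem \ref{teo:equiv} the actions of any two $V_i,V_j$ on $\minusc(G)$ are quasi isomorphic; the standard argument of \cite[Ch.~4]{SOS} then shows that collapsing all the $V_i$ to a single $U$ produces a quasi isomorphic complex, finitely generated over $\F[U]$, whose homology is independent of the grid dimension $n$ (this also underlies Theorem \ref{teo:equiv}). On the Heegaard Floer side the same collapse of the basepoint variables recovers $HFK^-(\lp,K)$ and $\widehat{HFK}(\lp,K)$. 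Combining with the chain--level identification of the previous paragraph gives the desired isomorphism of $\F[U]$-- (resp. $\F$--) modules, provided the trigradings are matched.

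The grading identification is the delicate part, and I expect it to be the main obstacle. The $\spinc$ component is easiest: one checks that a generator $x$ represents, under the identification $\spinc(\lp)\cong\zetap$ of \cite[Sec.~4.1]{absolutely}, the class singled out by equation \ref{eqn:spinc}. For the \emph{relative} Maslov and Alexander gradings, the computation in the proof of Proposition \ref{teo:grosso} already shows that a rectangle $r$ contributing to $\partial x$ drops $M$ by $1-2\sum_i O_i(r)$ and preserves $A$ after the weighting, which matches the holomorphic relative gradings read off from domains; this pins the two gradings down on each $\spinc$ summand up to a single additive constant. Fixing that constant --- in particular verifying that the normalization is $d(p,q,q-1)+1$ in equation \ref{eqn:maslov} --- is where the real work lies. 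The cleanest route is to invoke Ozsv\'ath--Szab\'o's large surgery formula together with their recursive computation of the correction terms $d$ of lens spaces in \cite{absolutely}, which forces exactly this normalization; alternatively one can compute the absolute Maslov grading of a single distinguished generator on the standard genus--$1$ diagram of the unknot in $\lp$ and propagate it through grid (de)stabilizations, using the invariance of both theories under these moves. Either way, once the absolute gradings agree on one generator they agree on all of $S(G)$, and the isomorphism is complete.
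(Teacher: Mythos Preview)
The paper does not actually prove this statement: it is stated as a citation of \cite{BGH} with no accompanying proof environment, and the paper's overall stance (see the proof of Theorem~\ref{teo:equiv}) is precisely to \emph{appeal} to the main result of \cite{BGH} rather than reprove it. So there is nothing to compare your argument against in this paper; what you have written is a sketch of the \cite{BGH} argument that the paper deliberately outsources.

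That said, your sketch is broadly the right one, with one small slip. In a twisted toroidal grid every complementary region is a square, so there are \emph{no} bigons at all in the Sarkar--Wang count; the $\alpha$- and $\beta$-degenerations in Proposition~\ref{teo:grosso} are width/height~$1$ \emph{annuli} built from two rectangles and they arise in the analysis of $\partial^2$, not as index-one domains contributing to $\partial$. The correct statement is simply that the nice-diagram differential reduces to a count of empty rectangles, which is exactly \eqref{eqn:differential}. Everything you write about collapsing the $V_i$ to a single $U$ and about pinning down the absolute gradings via the correction terms of \cite{absolutely} is in line with how \cite{BGH} proceeds.
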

\begin{rmk}\label{rmk:connectedsum}
Knot Floer homology is  known (see \emph{e.g.}~\cite[Thm.~7.1]{holdisknot}) to satisfy a formula\footnote{We do not specify here the various conventions involved for $\spinc$ structures in general, since in what follows we will only deal with $Y = S^3,\lp$, see~\cite[Sec.~7]{holdisknot} for a detailed account.} for the  connected  sum of two knots in rational homology 3-spheres; if  $ (Y,K) = (Y_0,K_0)\#(Y_1,K_1)$, then
\begin{equation}\label{eqn:connectedsum}
HFK^-\left( Y,K,s\right) \cong \bigoplus_{s_0 + s_1 = i} HFK^-\left( Y_0 , K_0 ,s_0\right) \otimes_{\F[U]} HFK^-\left(Y_1, K_1 ,s_1\right) 
\end{equation}
In each connected 3-manifold $Y$ the isotopy class of the homologically trivial unknot $\bigcirc$ is unique (since it bounds an embedded disk and manifolds are homogeneous); thus we can think of a \emph{local} knot $K$, \emph{i.e.}~a knot contained in a 3-ball inside $Y$ as the connected sum 
$$(Y,K) = (Y,\bigcirc) \# (S^3 , K^\prime )$$
for some knot $K^\prime$ in $S^3$.
It is then a straightforward computation to show that the grid homology of the unknot $\bigcirc \subset \lp$ is:
$$\minush (\lp, \bigcirc) = \bigoplus_{s \in \zetap} \F[U]_{\left[d(p,q,s),0\right]}$$
$$\hath (\lp, \bigcirc) = \bigoplus_{s \in \zetap} \F_{\left[d(p,q,s),0\right]}.$$
So, by Equation~\eqref{eqn:connectedsum}
$$\minush(\lp,K) = \bigoplus_{s \in \zetap}  \minush (S^3,K^{\prime})_{\left[d(p,q,s),*\right]}.$$
In other words, the grid homology of a local knot is completely determined by the homology of the same knot viewed as living in $S^3$ (and in particular its Alexander degrees are integers).
\end{rmk}
\begin{rmk}\label{rmk:semplici}
A \emph{simple knot} in $\lp$ is a knot admitting a grid of dimension 1, (see also \cite{hedden2011floer} and \cite{rasmussen2007lens}).
It is easy to show that in each lens space $\lp$ there is exactly one simple knot in each homology class; for $m \in \mathrm{H}_1 (\lp;\Z)$, denote this knot  by $T^{p,q}_m$. If $G$ is the dimension $1$ grid representing a simple knot in $L(p,q)$, then $|S(G)| = p$, and there is exactly one generator in each $\spinc$ degree. Therefore, there is no differential (since $\partial$ preserves the $\spinc$ degree), so the homologies of $T^{p,q}_m$ are:
$$\minush (T^{p,q}_m) \cong \bigoplus_{s\in \zetap} \F[U]_{[d(p,q,s),A(x_s)]}$$
$$\hath (T^{p,q}_m) \cong \bigoplus_{s\in \zetap} \F_{[d(p,q,s),A(x_s)]}$$
where $A(x_s)$ is the Alexander degree of the unique generators in degree $s$.
As in \cite{BGH} we say that these knots are Floer simple (or $U$-knot in the terminology  of \cite{absolutely}), meaning that the rank of the grid homology (over the appropriate ground ring) is exactly one in each $\spinc$ degree.
\end{rmk}

\section{Lift to $\Z$ coefficients}\label{sec:zcoeff}
The complexes we have used until now were defined to work with $\F$ as base ring; in particular, the proof of Proposition \ref{teo:grosso} relied on the parity of polygon decompositions to ensure that  $\left( \minusc , \partial \right)$ is in fact a chain complex. This section is devoted to prove Theorem~\ref{thm:main}, by exhibiting a combinatorial extension of the previous construction to integer coefficients. This was first done in the combinatorial setting for $S^3$ in \cite{manolescu2007combinatorial} (see also \cite{ozsvath2014combinatorial}).\\ We will adopt the group theoretic approach first developed in \cite{gallais2008} to define a sign function on rectangles, whose properties are precisely tuned to have $ \partial^2=0$. We note here that knot Floer homology can be defined in the analytic setting with integer coefficients \cite{holdisknot}, but it is currently not known whether it  coincides with its combinatorial counterparts.

A natural question to ask is to what extent the theory can change under such a change of coefficients; at the time of writing, there is no example of a knot in $S^3$ whose knot Floer homology with $\Z$ coefficients exhibits torsion (see Problem 17.2.9 of \cite{SOS}). 
Even in the lens space case, the computations displayed in section \ref{sec:computations} seem to suggest an analogous situation (see also the discussion in \cite[Sec.~15.6]{SOS}).\\ 

It is convenient to define signs on $Rect(G)$, rather than directly on $Rect^\circ (G)$; moreover the signs will not depend on the choice of a knot, but just on the parameters of the grid.
\begin{defi}
Given a grid diagram $G$, a \emph{sign assignment} on $G$ is a function $$\funct{\mathcal{S}}{Rect(G)}{\{ \pm 1\}}$$
such that the following conditions hold:
\begin{enumerate}
\item If $r_1 \ast r_2 = r_3 \ast r_4$ then $\segno{r_1} \segno{r_2} = - \segno{r_3}\segno{r_4}$
\item If $r_1 \ast r_2$ is a horizontal annulus ($\alpha$-strip), then $\segno{r_1}\segno{r_2} = 1 $
\item If $r_1 \ast r_2$ is a vertical annulus  ($\beta$-strip), then $\segno{r_1}\segno{r_2} = -1 $
\end{enumerate}
\end{defi}
We will prove in Theorem \ref{teo:segni} that sign assignments actually exist on twisted grid diagrams, and deal with problems related to their uniqueness later on.\\

We can now show how such a sign $\mathcal{S}$ can be used to promote $\hatc(G) $ and $ \minusc(G)$
from $\F \left[ V_1, \ldots, V_n \right]$ to $\mathbb{Z}\left[ V_1, \ldots, V_n \right]$ complexes.

To see why the properties given in the previous definition are indeed the right ones, fix a sign assignment $\mathcal{S}$ for $G$, and define
$$\partial_\mathcal{S} (x) =  \sum_{y \in S(G)} \sum_{\substack{r \in Rect^\circ (x, y)\\ r \cap \XX = \emptyset}} \mathcal{S}(r) \left( \prod_{i=1}^{n} {V_i}^{O_i (r)} \right) y , $$
Let us examine the coefficient of a generator $z \neq x$ in $ \partial_{\mathcal{S}}^2 (x)$; each polygon connecting $x$ to $z$ can be decomposed in two ways (as seen in Proposition~\ref{teo:grosso}).
The pairs corresponding to inequivalent decompositions of the same polygon cancel out due to condition (1) on $\mathcal{S}$.\\

If instead $x = z$ there are exactly $2n$ possible ways of connecting a generator to itself with empty polygons, which are $\alpha$ and $\beta$ degenerations; as noted before all of these strips contain one $\XX$ marking, so they do not contribute to the differential.\\
In order to prove the existence of a sign assignment, we are going to adopt the approach used in
\cite{SOS}, which relies on the paper \cite{gallais2008} of Gallais regarding the so-called \emph{Spin extension} of the permutation groups, introduced in the next definition.
\begin{defi}\label{def:spinext}
The \emph{Spin central extension} of the symmetric group $\mathfrak{S}_n$ is the group $\generalperm$ generated by the elements
$$\left< z, \taugen{i}{j} \: | \: 1\le i \neq j \le n \right>$$
subject to the  following relations:
\begin{itemize}
\item $z^2 = 1\:$ and  $\: z \taugen{i}{j} = \taugen{i}{j} z\:$ for $\:1 \le i \neq j \le n$
\item $ \taugen{i}{j}^2 =z\: $ and $\:\taugen{i}{j} = z\taugen{j}{i}  $
\item $ \taugen{i}{j} \taugen{k}{l} = z \taugen{k}{l} \taugen{i}{j} \:$ for distinct $\:1 \le i,j,k,l \le n$
\item $\taugen{i}{j} \taugen{j}{k} \taugen{i}{j} = \taugen{j}{k} \taugen{i}{j} \taugen{j}{k} = \taugen{i}{k}\:$ for distinct $\:1 \le i,j,k \le n$
\end{itemize}
\end{defi}
\begin{rmk}
The name Spin central extension is justified by the fact that this group can be derived as a $\faktor{\mathbb{Z}}{2\mathbb{Z}}$ extension of $\mathfrak{S}_n$ induced by the short exact sequence
\begin{equation}\label{eqn:seccs}
1 \longrightarrow \faktor{\mathbb{Z}}{2\mathbb{Z}} \longrightarrow  \generalperm  \xrightarrow{\mbox{  }\pi\:\mbox{  }} \mathfrak{S}_n \longrightarrow 1.
\end{equation}
Here $\pi$ is the surjective homomorphism defined by $\pi(z) =1$ and $\pi(\taugen{i}{j}) =\tau_{i,j}$.
\end{rmk}
\begin{defi}\label{def:section}
A section for $\generalperm$ is a map $$\funct{\rho}{\mathfrak{S}_n}{\generalperm}$$ such that $\pi\circ \rho = \mathrm{Id}_{\mathfrak{S}_n}$.
We will make a slight notational abuse, and also call sections the maps
$$\funct{\rho}{\mathfrak{S}_n \times \zetap^n }{\generalperm \times \zetap^n }$$
given by considering the product of a section with the identity map on $\zetap^n$.
\end{defi}
We are going to define a map
\begin{equation}
\varphi : Rect(G) \longrightarrow \widetilde{\mathfrak{S}}_n \times \zetap^n
\end{equation}
that associates to a rectangle $r \in Rect(x,y)$ an element in $ \widetilde{\mathfrak{S}}_n \times \zetap^n$,
enabling us to ``compare'' the generators that compose the vertices of $r$. If the elements of $x$ and $y$ in the bottom edge of $r$  belong respectively to $\beta_i$ and $\beta_j$, the
first component of $\varphi(r)$ is given by the generalised transposition $\widetilde{\tau}_{i,j}$. The second component of $\varphi$ is given by the difference
between the $p$-coordinates of $x$ and $y$.
The two generators differ only in two components, so necessarily $$(a^x_1 - a^y_1, \ldots, a^x_{n} - a^y_{n}) = (0,\ldots, 0, \pm k, 0,\ldots, 0, \mp k, 0, \ldots,0)$$
for some $k \in \{0, \ldots, p-1\}$.
\begin{rmk}
To simplify the proof of the next theorem, we observe here that the generalised permutation part of the map $\varphi$ does not depend on the eventual ``wrapping'' of a rectangle on the grid, while the $\zetap^n$ part does.
\end{rmk}
\begin{figure}[ht]
\labellist
\pinlabel $i$ at 0 10
\pinlabel $j$ at 105 10
\pinlabel $i$ at 295 10
\pinlabel $j$ at 386 10
\endlabellist
\includegraphics[width=5cm]{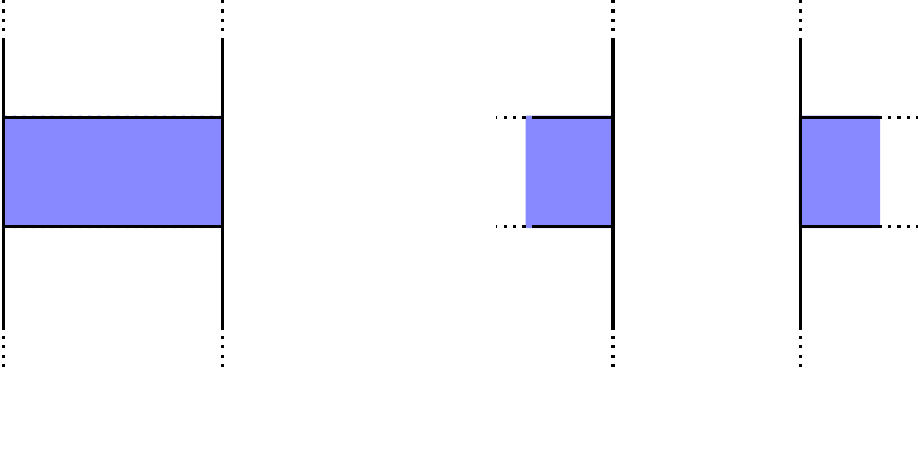}
\caption{The generalised transpositions associated to these two rectangles are $\widetilde{\tau}_{ij}$ and $\widetilde{\tau}_{ji} = z\widetilde{\tau}_{ij}$.}
\label{fig:traspogener}
 \end{figure}
\begin{ex}
Consider the rectangles $R$ in the left part of Figure~\ref{fig:rettangolidis}; the value $\varphi(R)$ associated is
 $\left( \widetilde{\tau}_{1,3}, (0,-1,0,1,0) \right)$ for the horizontal one and $\left( \widetilde{\tau}_{4,5}, (0,0,0,0) \right)$ for the vertical. Note also that swapping the order of the column's indices changes the associated generalised transposition, as indicated in Figure~\ref{fig:traspogener}.
\end{ex}
Given a section $\rho $ we can build a sign assignment as follows:
\begin{equation}\label{eqn:segno}
\mathcal{S}_\rho (r) =
\bigg \{
\begin{array}{rl}
1 & \mbox{ if } \rho(x) \varphi (r) = \rho (y) \\
-1 & \mbox{ if }\rho(x) \varphi (r) = z \rho (y)\\
\end{array}
\end{equation}
for $r \in Rect(x,y)$. The operation on $ \widetilde{\mathfrak{S}}_n \times \zetap^n$
 consists in the product of permutations on the first factor, and addition on the $p$-coordinates.
\begin{thm}\label{teo:segni}
For any given section $\rho $ on $\generalperm$, the function $ \mathcal{S}_\rho $ defined above is a sign assignment.
\end{thm}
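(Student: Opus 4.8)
The plan is to translate the three defining conditions of a sign assignment into identities in the group $\generalperm \times (\zetap)^n$ and then to verify them from the presentation of Definition~\ref{def:spinext}. Identify $\{\pm 1\}$ with the central subgroup $\{1,z\}\subset\generalperm$ via $-1\leftrightarrow z$; then the definition of $\mathcal{S}_\rho$ reads $\rho(x)\,\varphi(r)=\mathcal{S}_\rho(r)\,\rho(y)$ for every $r\in Rect(x,y)$. Since $z$ is central, for a concatenation $r_1\ast r_2$ with $r_1\in Rect(x,y)$ and $r_2\in Rect(y,z)$ we get
$$\rho(x)\,\varphi(r_1)\,\varphi(r_2)=\mathcal{S}_\rho(r_1)\,\rho(y)\,\varphi(r_2)=\mathcal{S}_\rho(r_1)\mathcal{S}_\rho(r_2)\,\rho(z),$$
so that $\mathcal{S}_\rho(r_1)\mathcal{S}_\rho(r_2)=\rho(x)\,\varphi(r_1)\,\varphi(r_2)\,\rho(z)^{-1}\in\{1,z\}$. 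Hence condition~(1) is equivalent to $\varphi(r_1)\varphi(r_2)=z\,\varphi(r_3)\varphi(r_4)$ whenever $r_1\ast r_2=r_3\ast r_4$, while conditions~(2) and~(3) are equivalent to $\varphi(r_1)\varphi(r_2)=1$, respectively $\varphi(r_1)\varphi(r_2)=z$, whenever $r_1\ast r_2$ is an $\alpha$-strip, respectively a $\beta$-strip.

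First I would discard the $(\zetap)^n$ factor. The $(\zetap)^n$-component of $\varphi(r)$ is, by construction, the difference of the $p$-coordinates of the two ends of $r$, so the $(\zetap)^n$-component of $\varphi(r_1)\varphi(r_2)$ equals $a^x-a^z$; this depends only on the endpoints and vanishes when $x=z$. Thus in each of the three statements the $(\zetap)^n$-parts of the two sides agree automatically, and everything reduces to the $\generalperm$-valued first component. By the remark following the definition of $\varphi$, that component is insensitive to how rectangles wrap around the torus, so it is enough to treat one representative of each combinatorial type.

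It then remains to run through the finite list of configurations already isolated in the proof of Proposition~\ref{teo:grosso}. For conditions~(2) and~(3): an $\alpha$- or $\beta$-degeneration $r_1\ast r_2$ is a union of two thin rectangles whose bottom edges run between the same pair of $\beta$-curves; remembering that $\widetilde\tau_{ij}$ records the $\beta$-curve of the $x$-corner first and of the $y$-corner second, and that $\widetilde\tau_{ij}=z\,\widetilde\tau_{ji}$, one reads off from the pictures that the two generalized transpositions multiply to $\widetilde\tau_{ij}^{2}=z$ in the $\beta$-strip case and to $\widetilde\tau_{ij}\widetilde\tau_{ji}=z\,\widetilde\tau_{ij}^{2}=1$ in the $\alpha$-strip case, which are precisely~(3) and~(2). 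For condition~(1) only the cases $M\in\{2,3,4\}$ of that proof can occur ($M=0$ admits no second decomposition and $M=1$ is impossible). When $M=4$ the two decompositions use disjoint transpositions in opposite orders, and $\widetilde\tau_{ij}\widetilde\tau_{kl}=z\,\widetilde\tau_{kl}\widetilde\tau_{ij}$ supplies the required $z$. When $M=2$ both products are lifts of the identity permutation, and the configurations force one of them to read as $\widetilde\tau_{ij}^{2}=z$ and the other as $\widetilde\tau_{ij}\widetilde\tau_{ji}=1$. When $M=3$ both products are lifts of a common $3$-cycle, and inspecting the configurations of Figure~\ref{fig:combinatorialsign} together with the braid relation $\widetilde\tau_{ij}\widetilde\tau_{jk}\widetilde\tau_{ij}=\widetilde\tau_{ik}$ (and $\widetilde\tau_{ij}^{-1}=z\,\widetilde\tau_{ij}$) shows that the two lifts differ by $z$.

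Since the conceptual skeleton is light, I expect the real work, and the main risk of error, to lie in the bookkeeping of the last paragraph: because $\widetilde\tau_{ij}$ is not symmetric in $i$ and $j$, in each configuration one must read off carefully from the picture which $\beta$-curve occurs as the $x$-corner and which as the $y$-corner on the bottom edge of each of the (up to) four rectangles, as well as the relative orientation of the two rectangles in a pair; a single misreading yields the wrong conclusion. The $M=3$ case is the most delicate, having several sub-configurations (classified up to wrapping in Figure~\ref{fig:combinatorialsign}) and relying on the braid relation rather than on a bare squaring identity; the $M=2$ case, which has no $S^3$ precedent, turns out to be the simplest once the orientation of the second rectangle is pinned down. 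As a by-product the same computation shows that the two decompositions of each polygon occurring in $\partial_{\mathcal{S}}^2$ now contribute with opposite signs, so that $\partial_{\mathcal{S}}^2=0$, as anticipated before the statement.
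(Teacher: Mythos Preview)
Your proposal is correct and follows essentially the same route as the paper: reduce the three conditions on $\mathcal{S}_\rho$ to the identity $\varphi(r_1)\varphi(r_2)=z\,\varphi(r_3)\varphi(r_4)$ (the paper's equation~\ref{eqn:segnidim}) together with the strip computations, and then verify these by the case analysis $M\in\{2,3,4\}$ already carried out in Proposition~\ref{teo:grosso}, invoking the relations of Definition~\ref{def:spinext}. Your framing is marginally cleaner in two spots --- you make the identification $\{\pm1\}\leftrightarrow\{1,z\}$ and the resulting formula $\mathcal{S}_\rho(r_1)\mathcal{S}_\rho(r_2)=\rho(x)\varphi(r_1)\varphi(r_2)\rho(z)^{-1}$ explicit at the outset, and you dispose of the $(\zetap)^n$ factor once and for all before the casework --- but the substance and the verifications are the same as in the paper.
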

\begin{proof}

First we deal with $\alpha$-strips; suppose $R_1 \in Rect(x,y)$, $R_2 \in Rect(y,x)$ are such that $R_1*R_2$ is an $\alpha$-strip. Then we have 
\begin{center}
$\varphi(R_1) = \left(\taugen{i}{j} , (0,\ldots, k , -k , \ldots ,0) \right)$\\
$\varphi(R_2) = \left(\taugen{j}{i} , (0,\ldots,-k , k , \ldots , 0) \right)$
\end{center}
for some indices $i,j$ and integer $k$.
\begin{figure}
\includegraphics[width=6cm]{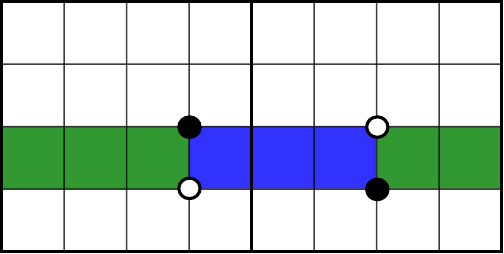}
\caption{An $\alpha$-strip of height 1. Only the relevant components of the generators are shown.}
\label{fig:alphadeg}
\end{figure}
So if $$\rho(x) \varphi(R_1) = \rho (y)$$
then
 $$\rho (x) = \rho(x) \varphi(R_1)\varphi(R_2) =  \rho (y) \varphi(R_2)$$
which implies $\mathcal{S}(R_1) = \mathcal{S}(R_1) = 1$.\\
If instead we had  $$\rho(x)\varphi(R_1) = z\rho (y)   \;\;\; \Rightarrow \mathcal{S}(R_1) = -1$$ then 
 $$z\rho (x) =  \rho (y) \varphi(R_2)  \;\;\; \Rightarrow \mathcal{S}(R_2) = -1$$
In both cases $\mathcal{S}(R_1) \mathcal{S}(R_2) = 1$.

Next, we examine the behaviour of signs for  $\beta$-strips.
As in the previous case, there is only one other possible generator $y$ that induces a decomposition of an annulus starting from $x$. The permutation components of the two rectangles $R_1 \in Rect(x,y)$ and $R_2 \in Rect(y,x)$  are both $\taugen{i}{j}$.
 So if $$\rho(x)\varphi(R_1) = \rho (y)  $$
 $$\rho(x) z =   \rho(x)\varphi(R_1)\varphi(R_2) = \rho (y)\varphi(R_2)$$ 
 which implies	 $\mathcal{S}(R_1) \mathcal{S}(R_2) = -1$.\\
The centrality of $z$ in $\generalperm$ tells us that the case with $\mathcal{S}(R_1) = -1$ is identical.\\

Now, given a general polygon $P = r*r^\prime$ connecting two generators $x \neq x^\prime$, we can easily prove that Definition \ref{def:spinext} implies  \begin{equation}\label{eqn:cambiosegno}
\rho(x^\prime) = z^{\frac{1 -\mathcal{S}(r) \mathcal{S}(r^\prime)}{2}} \varphi (r) \varphi (r^\prime) \rho (x).
\end{equation}
To show this, assume $r \in Rect(x,w)$ and $r' \in Rect(w,x')$ for some intermediate generator $w$. Then, by Equation~\eqref{eqn:segno}, it is immediate to see that $\varphi(r) \varphi(r') \rho(x)$ is equal to either $\rho(w) \varphi(r')$ or $z\rho(w) \varphi(r')$, according to whether $\mathcal{S}(r) =1$ or $\mathcal{S}(r) = -1$ respectively. In the same way, $\rho(w) \varphi(r')$ is equal to either $\rho(x')$ or $z \rho(x')$, this time according to the value of $\mathcal{S}(r')$. The factor $z^{\frac{1 -\mathcal{S}(r) \mathcal{S}(r^\prime)}{2}}$ in Equation~\eqref{eqn:cambiosegno} compensates the possible appearance of $z$ factors. \\

According to the proof of Proposition~\ref{teo:grosso}, each polygon which is not an $\alpha\slash \beta$-strip can be written as the concatenation of two distinct pairs of rectangles;
so we just need to check for all possible polygons
$$P = r(x,y) * r(y,x^\prime) = r(x,w)*r(w,x^\prime)$$ that the following identity holds:
\begin{equation}\label{eqn:segnidim}
\varphi(r(x,y)) \varphi (r(y,x^\prime)) = z \varphi(r(x,w)) \varphi (r(w,x^\prime)),
\end{equation}
where $y\neq w$ are two auxiliary generators which differ by only one transposition from $x$ and $z$.
All we need to do is verify Equation~\eqref{eqn:segnidim} in the cases $M = 2,3,4$ from the proof of Proposition~\ref{teo:grosso} (recall that we already considered $M=0$, and $M=1$ was discarded).\\

It is easy to check that the generalised permutations associated to polygons corresponding to the $M=3$ case are the same of~\cite[Ch.~15]{SOS} in the $S^3$ case; in particular this is true even when the rectangles wrap around the grid, since the generalised permutation part  does not depend on the $p$-coordinates of the generators.\\
The case $M=4$ is immediate: as shown in Figure~\ref{fig:segnidisgiunti}
\begin{figure}
\labellist
\pinlabel $k$ at 19 0
\pinlabel $i$ at 37 0
\pinlabel $j$ at 66 0
\pinlabel $i$ at 100 0
\pinlabel $j$ at 130 0
\pinlabel $i$ at 160 0
\pinlabel $j$ at 188 0
\pinlabel $l$ at 210 0
\endlabellist
\includegraphics[width=6cm]{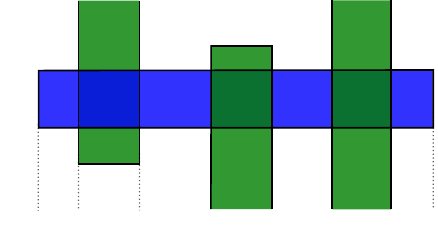}
\caption{The generalised permutations associated to the green and blue rectangles are $\widetilde{\tau}_{ij}$ and $\widetilde{\tau}_{kl}$ respectively.}
\label{fig:segnidisgiunti}
\end{figure}
the permutations associated to the two decompositions are such that Equation~\eqref{eqn:segnidim} becomes exactly the third relation defining $\generalperm$.\\
Lastly, we deal with $M=2$; the generalised transpositions associated to $r(x,y)$ and $r(y,x^\prime)$ are $\widetilde{\tau}_{ij}$ and $\widetilde{\tau}_{ji}$.
For the two rectangles $r(x,w)$ and $r(w,x^\prime)$ on the right in Figure~\ref{fig:iugualea2segni} the associated transposition is $\widetilde{\tau}_{ij}$ in both cases.
So in particular this implies that  if $\mathcal{S}(r(x,y))\mathcal{S}(r(y,x^\prime)) =-1$ then $\mathcal{S}(r(x,w))\mathcal{S}(r(w,x^\prime)) =1$ and vice versa. Therefore, Equation~\eqref{eqn:segnidim} is always satisfied, and we are done.
\begin{figure}[ht]
\includegraphics[width=8cm]{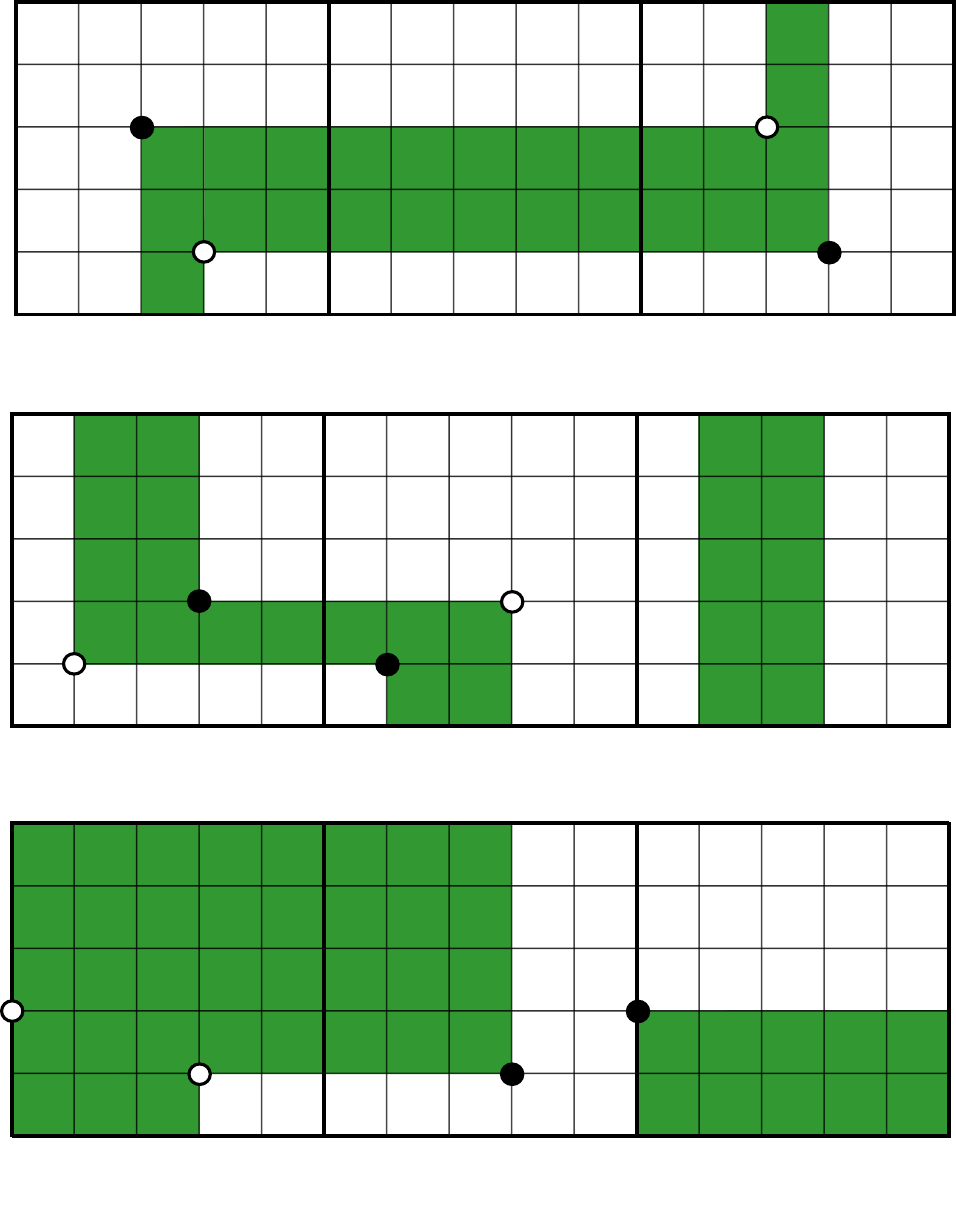}
\caption{The generalised transpositions $\widetilde{\tau}_{ij}$ and $\widetilde{\tau}_{ji}$ are associated to the two decompositions in the $M=2$ case.}
\label{fig:iugualea2segni}
\end{figure}
\begin{figure}[ht]
\labellist
\pinlabel $i$ at 8 10
\pinlabel $j$ at 95 10
\pinlabel $k$ at 187 10

\pinlabel $i$ at 294 10
\pinlabel $j$ at 381 10
\pinlabel $k$ at 473 10

\pinlabel $i$ at 8 270
\pinlabel $j$ at 95 270
\pinlabel $k$ at 187 270

\pinlabel $i$ at 294 270
\pinlabel $j$ at 381 270
\pinlabel $k$ at 473 270
\endlabellist
\includegraphics[width=7cm]{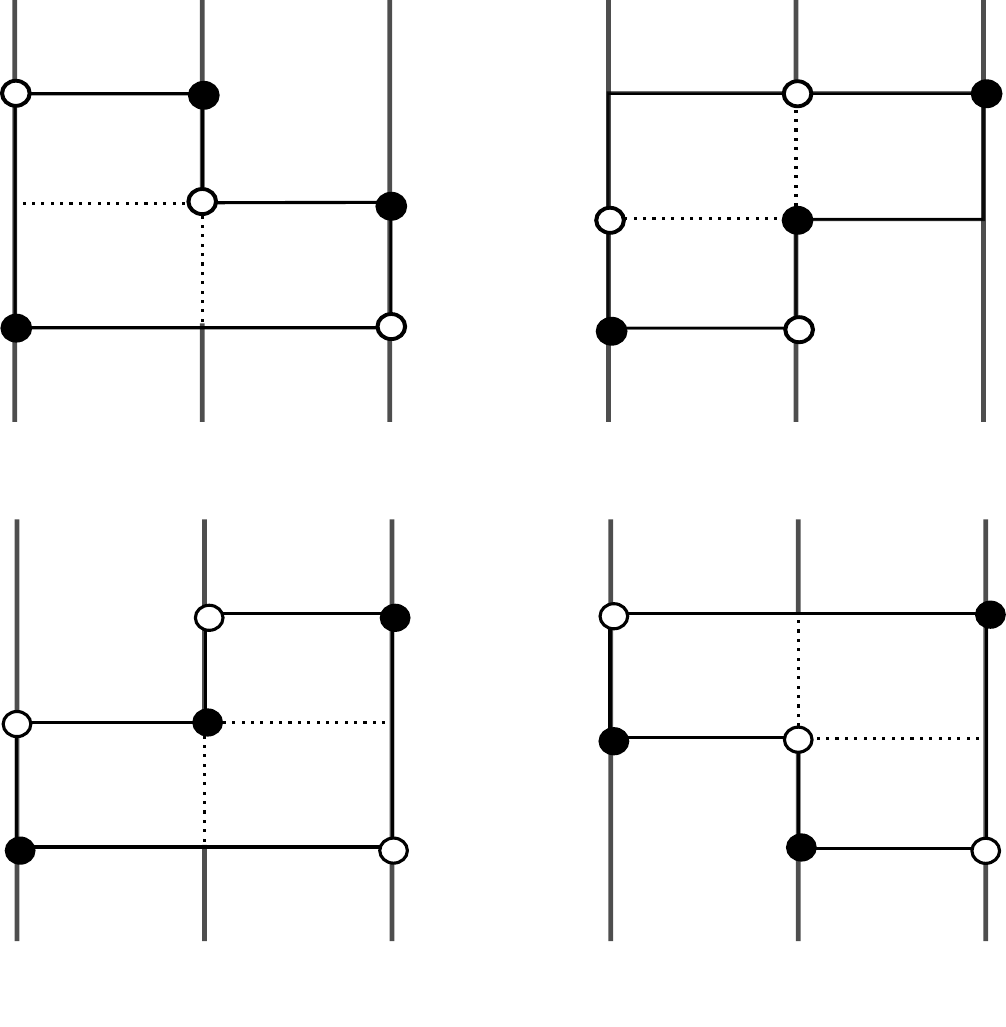}
\caption{The four relevant combinatorial possibilities for the $M=3$ case in the $S^3$ setting. Remember that the eventual wrapping of one rectangle over the other does not change the relations in $\generalperm$.}
\label{fig:combinatorialsign}
\end{figure}
\end{proof}
\begin{rmk}
It is worth noting that the trivial choice for signs --that is, treating each rectangle just as a generalised permutation, without keeping track of $p$-coordinates (like in the $S^3$ setting)-- can't distinguish a $\beta$ degeneration from other polygons which admit two distinct decompositions into rectangles, as shown in Figure~\ref{fig:stessaperm}.
\end{rmk}
\begin{figure}[ht]
\includegraphics[width=12cm]{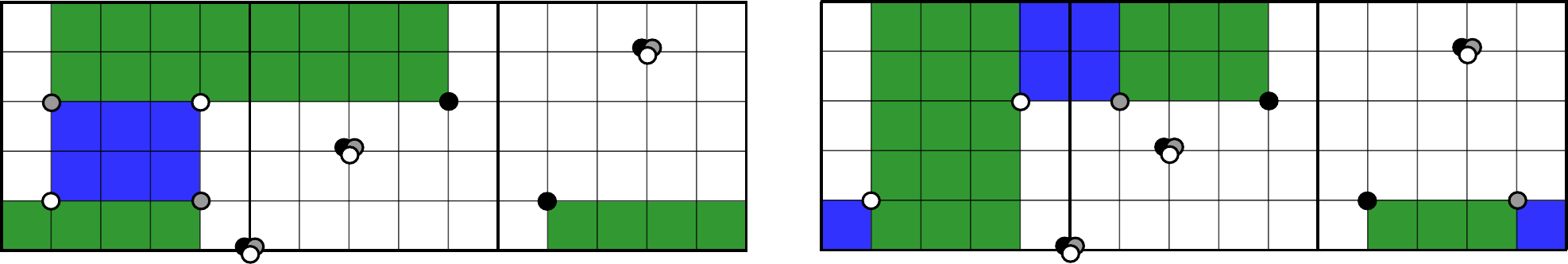}
\caption{The white and black generators have the same permutation component, but the polygon connecting them admits two distinct decompositions. In particular, it cannot be an $\alpha\slash\beta$-strip.}
\label{fig:stessaperm}
\end{figure}

\begin{defi}
A section $\funct{\rho}{\mathfrak{S}_n \times \zetap^n }{\generalperm \times \zetap^n }$ is said to be \emph{compatible} with a sign assignment $\mathcal{S}$ if for any pair $x,y$ of generators, and any $r \in Rect(x,y)$ 
$$\rho (y) =
\bigg \{
\begin{array}{rl}
\rho(x) \widetilde{\tau}(r) & \mbox{ if } \mathcal{S}(r) = 1 \\
z \rho(x) \widetilde{\tau}(r) & \mbox{ if } \mathcal{S}(r) = -1 \\
\end{array}
$$%\end{equation*}
is satisfied.
\end{defi}
\begin{prop}
Given a sign assignment $\mathcal{S}$, there are exactly two compatible sections.
\end{prop}
\begin{proof}
Sections were defined in Definition~\ref{def:section} to be extensions of algebraic sections of the short exact  sequence~\eqref{eqn:seccs} via the identity on the $\zetap^n$ component (for a grid of dimension $n$). Therefore, the statement is equivalent to proving the existence of exactly two compatible sections $\funct{\rho}{\mathfrak{S}_n}{\generalperm}$. This is precisely the content of \cite[Prop~15.2.13]{SOS}, so we can conclude.
\end{proof}

Now, for the uniqueness of sign assignments, denote by $\mathcal{G}auge(G)$ the group of maps $$v: S(G) \longrightarrow \faktor{\mathbb{Z}}{2\mathbb{Z}}$$ $\mathcal{G}auge(G)$ acts on sections as follows:
\begin{equation}
\rho^v (x) =
\bigg \{
\begin{array}{rl}
\rho(x)   & \mbox{ if } v(x) = 1\\
z\rho(x)  & \mbox{ if } v(x) = -1\\
\end{array}
\end{equation}

This action is free and transitive; $\mathcal{G}auge(G)$ also acts on the set of sign assignments:
if $S$ is a sign on a grid $G$ and $v \in \mathcal{G}auge(G)$, define  $S^v (r) = v(x) S(r) v(y)$ for $r \in Rect(x,y)$.

As in the $S^3$ case, it is easy to show that there is only one sign assignment on a grid, up to this action of $\mathcal{G}auge(G)$.
The uniqueness now follows by noting that if $S_1$  and $S_2$ are two sign assignments on a grid $G$, then $S_2 = S_1^v$ for some $v \in \mathcal{G}auge(G)$, and the map
$$f: (\minusc (G), \partial_{S_1}) \longrightarrow (\minusc (G), \partial_{S_2})$$ given by $f(x) = v(x) x$ is an isomorphism (of tri-graded $R$-modules).
This concludes the proof of Theorem~\ref{thm:main}.

\section{Computations}\label{sec:computations}
\subsection{The programs:}
It becomes immediately apparent that the work needed to actually compute $\hath (G)$ for grids with dimension greater than 3 is not manageable by hand\footnote{The generating set for a  grid with parameters $(n,p,q)$ has  $n! p^n$ elements!}. So the author developed several programs in Sage \cite{sage} capable of computing the hat flavoured grid homology of links in lens spaces.
The computation can be made with $\Z$ coefficients, provided that the grid dimension is less than 5.

By using this tool we were able to verify that all knots with a grid representative whose parameters satisfy the following conditions, are $r$-torsion free ($r\le 17$):
\begin{itemize}
\item for $n = 2$, $p \le 12$
\item for $n= 3$, $p \le 6$
\item for $n = 4$, $p \le 4$
\item $n = 5$, $p\le 2$
\end{itemize}
The programs can be freely used interactively online at my homepage \cite{homepage}, or downloaded and used on a local Sage distribution. We recall here that there are several programs that compute the grid homology\slash knot Floer homology of knots in the $3$-sphere. In particular M.~Culler's \emph{Gridlink} \cite{gridlink} includes code by J.A.~Baldwin and W.D.~Gillam \cite{baldwin2012computations} that computes $\widehat{\mathrm{GH}}$, and there is a more recent program by Ozsv\'ath and Szab\'o \cite{OSprog} that can quickly compute $\widehat{\mathrm{HFK}}$ for knots with relatively high crossing number.

\subsection{Grid homology calculator}
The input consists of the grid parameters $(n,p,q)$, followed by two strings of length $n$ determining the positions of the $\XX$ and $\OO$ markings. We encode the markings with a string of length $n$ for each kind; to the $i$-\emph{th} marking (from the bottom  row) we associate the number of the small square containing it (from the left, and starting from 0). As an example, the knot in Figure~\ref{fig:griglia} is encoded as $\XX = [12, 1,8,5,9]$ and $ \OO = [6,3,0,9,12]$.\\

The output consists of the following:
\begin{itemize}
\item (Optional) A drawing of the chosen grid
\item The hat grid homology\footnote{If the grid dimension is greater than 5 it returns the $\F$ version.} $\hath( G, \mathfrak{s};\Z)$ for each $\mathfrak{s} \in \spinc$ structure, and its decategorification.
\item Whether the knot is rationally fibred, the homology class and its rational genus (see \cite[Sec.~1]{ni2014heegaard} for the definitions).
\item (Optional) A long list of the generators with their bi-grading.
\item (Optional) A drawing of the grid for the lift of the knot to $S^3$, together with its (univariate) Alexander polynomial and the number of components of the lift.
\end{itemize}
Basically, the program creates the generators $S(G)$ and computes their degrees; afterwards it checks for empty rectangles, and creates the matrices of the differentials.

Rather than computing the module $\hatc (G)$, we adopt the simpler approach of computing yet another version of the grid homology, known as \emph{tilde} flavoured homology, $\widetilde{\mathrm{GH}}(G)$.

The complex is simply the free $\Z$ module generated over $S(G)$, and the differential counts only those empty rectangles that do not contain any marking:
$$\widetilde{\partial} (x) = \sum_{y \in S(G)} \sum_{ \substack{r \in Rect^\circ (x,y) \\ (\XX \cup \OO ) \cap r = \emptyset }} \mathcal{S}(r) y$$
where $\mathcal{S}$ is a sign assignment.\\
Using the handy group theoretic capabilities of Sage,  the relations in $\generalperm$ (for $n \le 5$) were encoded in a matrix associated to the differential.\\

A minor technical hurdle here is represented by the fact that the tilde flavoured version is not directly an invariant of the knot represented by the grid. This can be easily seen \emph{e.g.}~by computing $\tildeh (G)$ in any $\spinc$ degree, for the grids of Example~\ref{esempio}.
However, the hat version can nonetheless be recovered from it:
\begin{prop}[Prop.~4.6.13 of \cite{SOS}]\label{prop:tilde}
Given a grid $G$ of dimension $n$ representing the knot $K \subset \lp$, there is a graded isomorphism $$\tildeh (G) = H_* \left( \tildec (G),\widetilde{\partial} \right) \cong \hath (\lp, K) \otimes W^{\otimes(n-1) }$$
where $W = \Z_{[0,0]} \oplus \Z_{[-1,-1]}$.
\end{prop}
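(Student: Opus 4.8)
The plan is to adapt the standard stabilization argument of \cite[Chap.~14]{SOS} to the lens space setting. Since we already know (Theorem~\ref{teo:equiv}) that $\hath(G)$ depends only on the isotopy type of $(\lp,K)$ via the main result of \cite{BGH}, and since $\tildeh(G)$ does \emph{not} have this property, the first step is to understand precisely how $\tildec(G)$ relates to $\hatc(G)$ at the chain level. The key observation is that setting $V_1=0$ in $\minusc(G)$ gives $\hatc(G)$, but one can instead quotient out differently: the tilde differential $\widetilde\partial$ is obtained from the hat differential by additionally killing all rectangles containing any $\OO$ marking. Equivalently, $\tildec(G)$ is the associated graded of $\hatc(G)$ with respect to the filtration by total $\OO$-multiplicity, or better: $(\tildec(G),\widetilde\partial) \cong (\hatc(G),\widehat\partial)\otimes_{\widehat R}\, \widehat R/(V_2,\dots,V_n)$ tensored up appropriately. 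I would make this identification carefully, keeping track of the trigrading and the signs $\mathcal{S}(r)$ (which are defined on $Rect(G)$ independently of the flavor, so they pass through unchanged).

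The second step is the homological algebra input. One shows that for each $i\ge 2$, the quotient map $\hatc(G) \to \hatc(G)/(V_i)$ fits into a short exact sequence whose connecting map is (quasi-isomorphic to) multiplication by $V_i$; by the Proposition preceding Theorem~\ref{teo:equiv}, multiplication by $V_i$ on $\minusc(G)$ is quasi-isomorphic to multiplication by $V_j$, and on the hat complex $V_1$ acts as zero, hence every $V_i$ acts nilpotently — in fact as zero up to quasi-isomorphism — on $\hath(G)$. Therefore each time we quotient by one more variable we tensor the homology with the mapping cone of the zero map, i.e.\ with $W=\Z_{[0,0]}\oplus\Z_{[-1,-1]}$ (the grading shift $[-1,-1]$ being exactly the bidegree of $V_i$, namely $(M,A)=(-2,-1)$ on the complex, which after the cone construction becomes the $[-1,-1]$ summand). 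Iterating over $i=2,\dots,n$ produces the factor $W^{\otimes(n-1)}$. This is the step I expect to be the main obstacle: one must verify that the $\Z$-coefficient spectral sequence or mapping-cone argument collapses, i.e.\ that there are no nontrivial extensions or higher differentials over $\Z$ — this is where the sign assignment $\mathcal{S}$ and the torsion-freeness subtleties really enter, and where the lens-space polygons with $O_i(P)=2$ require an extra check compared to the $S^3$ case.

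Finally I would assemble the pieces: combining the chain-level identification $\tildec(G)\simeq \hatc(G)\otimes(\text{cone of }0)^{\otimes(n-1)}$ with the collapse, one gets the graded isomorphism $\tildeh(G)\cong \hath(\lp,K)\otimes W^{\otimes(n-1)}$. A sanity check on the dimension-$1$ grid of a simple knot $T^{p,q}_m$ (Remark~\ref{rmk:semplici}), where $n=1$ so $W^{\otimes 0}$ is trivial and $\tildeh(G)=\hath(G)$, confirms the base case; and Example~\ref{esempio}, where a stabilization changes $\tildeh$ but not $\hath$, exhibits the nontrivial tensor factor in action. The whole argument is, modulo the sign bookkeeping, a transcription of the $S^3$ proof, so invoking \cite[Chap.~14]{SOS} for the mapping-cone formalism and only checking the lens-space-specific polygon cases (via Proposition~\ref{teo:grosso}) would be the efficient route.
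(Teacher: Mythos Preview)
Your outline is essentially the standard argument from \cite{SOS}, which is exactly what the paper does: the proposition is stated with a citation to \cite{SOS} and no independent proof is given, so there is nothing to compare beyond observing that you have sketched the same mapping-cone argument the reference contains. One small comment: your worry about extensions over $\Z$ and about lens-space polygons with $O_i(P)=2$ is unnecessary here --- the homotopy $V_i\simeq V_j$ is realized by an explicit chain homotopy (counting empty rectangles through a fixed $\XX$ marking), and a single embedded rectangle never contains a marking twice, so the cone of $V_i$ on $\hatc$ splits on the nose and no spectral-sequence collapse or extra polygon check is needed.
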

After computing the homology $\widetilde{\mathrm{GH}}(G)$, the program ``factors out" the tensor product dependent on the size of the grid, and prints the requested information.

\subsection{A small example}
Knot theory (and hence grid homology) in lens spaces is quite more complex than its 3-sphere counterpart: besides the fact that knots need not be homologically trivial, they also can be nontrivial for very small grid parameters. As an example, define
$$f(p) = \min\{\text{dimension of a grid representing a non-simple knot in } \lp \}$$
Then $f(1) = 5$ (this is the trefoil in $S^3$), $f(2) = 3$ and $f(p>2) = 2$ (for this last case see Figure~\ref{fig:0134}).
\begin{ex}\label{ex:computations}
We sketch here the computation for the various flavours of grid homology in the case of the knot in Figure~\ref{fig:0134}.  The generating set $S(G,s)$ in each $\spinc$ degree $s \in \{0,\pm1\}$ has 6 elements, which we will denote $x^0_s, \ldots, x^5_s$ for $s = 0,1$. We do not need to compute the $s = -1$ complex, as this is quasi-isomorphic to the $s=1$ one (this follows from the existence of an involutive operation on the set of $Spin^c$ structures, see \emph{e.g.}~\cite[Sec.~3]{holdisknot} for a detailed account).

\begin{figure}
\includegraphics[width=6cm]{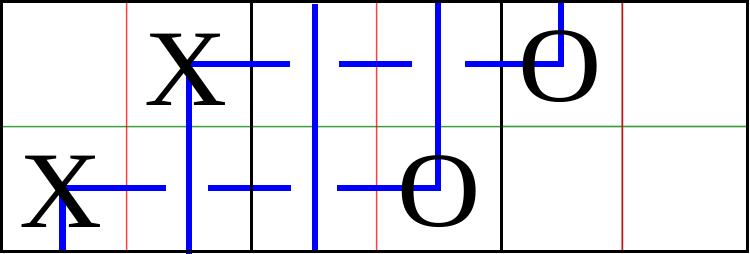}
\caption{The knot in $L(3,1)$ described by $\XX ,\OO = [0,1], [3,4]$.}
\label{fig:0134}
\end{figure}
We can now list the generators, with their bi-degree and differential:\\

\begin{tabular}{ccc}
generator & $\left(M, A \right)$ &  differential\\
\hline
$\spinc$  degree = 0 & & \\
\hline
& &\\
$x_0^0$ & $\left(\frac{3}{2}, 1 \right)$   & $\partial (x_0^0) = 0$\\
$x_0^1$ & $\left(\frac{1}{2}, 0 \right)$   & $\partial (x_0^1) = (V_1 - V_2) x_0^0$\\
$x_0^2$ & $\left(\frac{1}{2}, 0 \right)$   & $\partial (x_0^2) =  (V_2 - V_1) x_0^0$\\
$x_0^3$ & $\left(-\frac{1}{2}, -1 \right)$ & $ \partial (x_0^3) = V_2 \left( x_0^1 + x_0^2 \right)$\\
$x_0^4$ & $\left(-\frac{1}{2}, -1 \right)$ & $\partial (x_0^4) = V_1 \left( x_0^1 +  x_0^2 \right)$\\
$x_0^5$ & $\left(-\frac{3}{2}, -2 \right)$ & $\partial (x_0^5) = - V_1 x_0^3  +V_2 x_0^4$ \\
& &\\
\hline
$\spinc$  degree = 1 & & \\
\hline
& &\\
$x_1^0$ & $\left(\frac{7}{6}, 0 \right)$   & $ \partial (x_1^0) = -x_1^1 + x_1^2$\\
$x_1^1$ & $\left(\frac{1}{6}, 0 \right)$   & $\partial (x_1^1) = 0 $\\
$x_1^2$ & $\left(\frac{1}{6}, 0 \right)$   & $\partial (x_1^2) = 0$\\
$x_1^3$ & $\left(\frac{1}{6}, -1 \right)$  & $\partial (x_1^3) = x_1^4 - x_1^5$\\
$x_1^4$ & $\left(-\frac{5}{6},  -1\right)$ & $\partial (x_1^4) = V_2 x_1^1 - V_1 x_1^2$\\
$x_1^5$ & $\left(-\frac{5}{6}, -1 \right)$ &  $\partial (x_1^5) = V_2 x_1^1 - V_1 x_1^2 $   \\
&&\\
\end{tabular}\\
Since $\widetilde{\mathrm{GC}}(G, 0)$ has no differentials\footnote{Recall that its differential can be obtained by $\partial$, setting all $V_i$ variables to 0.}, the homology coincides with the complex. In $\spinc$ degree 1 instead, the tilde homology is generated by $x_1^1$ and $x_1^4$, so   $\widetilde{\mathrm{GH}}(G, 1) \cong \Z_{\left[\frac{1}{6},0 \right]} \oplus \Z_{\left[-\frac{5}{6},-1 \right]}$.\\ 

The computation of the minus flavour is just slightly more involved; $\minush (L(3,1) , K , 0)$ is composed by a copy of $\Z \left[U \right]$ generated by $x_0^0$, plus two $U$-torsion components, generated by $x_0^1 + x_0^2 $ and $x_0^3 + x_0^4 $. Altogether
$$ \minush (L(3,1) , K , 0)  = \Z [U]_{\left[\frac{3}{2}, 1 \right]} \oplus \Z_{\left[\frac{1}{2}, 0 \right]}  \oplus \Z_{\left[ -\frac{1}{2}, -1\right]} $$

In the last case, we get $$ \minush (L(3,1) , K , 1)  = \Z [U]_{\left[\frac{1}{6},0 \right]} $$
generated by $x_1^1$. The hat homology can be obtained either by factoring out the tensor product with $\Z_{\left[ 0,0\right]} \oplus \Z_{\left[-1,-1 \right]}$ from the tilde flavour, or deleting all dotted differentials in the minus complex of Figure~\ref{fig:complessi}, then computing the homology.
\begin{equation}
\hath (L(3,1),K,i) =
\bigg \{
\begin{array}{cl}
\Z_{\left[ \frac{3}{2} , 1 \right]} \oplus\Z_{\left[ \frac{1}{2} , 0\right]} \oplus \Z_{\left[ -\frac{1}{2} , -1\right]}  & \mbox{ if } $i = 0$\\
\Z_{\left[ \frac{1}{6},0\right]}  & \mbox{ if } i = \pm 1\\
\end{array}
\end{equation}
\end{ex}
This particular knot is interesting for several reasons: firstly, it is the smallest non-trivial or simple knot in any lens space. More importantly, it can be also proved that, despite being nullhomologous, it is not concordant (or even almost-concordant, see \cite{celoria2018concordances}) to a local knot. Finally, the group $\mathrm{GH}^-(G,0)$ is isomorphic (up to a shift in the Maslov grading) to the knot Floer homology of the trefoil knot in $S^3$; this in no accident, and a rather more general statement is \cite[Thm.~1.2]{upsilon}.

\begin{center}
\begin{figure}[ht]
\labellist
\pinlabel $A$ at 375 840
\pinlabel $M$ at 500 743
\pinlabel $A$ at 960 970
\pinlabel $V$ at 990 890
\pinlabel $A$ at 300 322
\pinlabel $M$ at 490 280
\pinlabel $A$ at 909 370
\pinlabel $V$ at 990 350
\endlabellist
\includegraphics[width = 13cm]{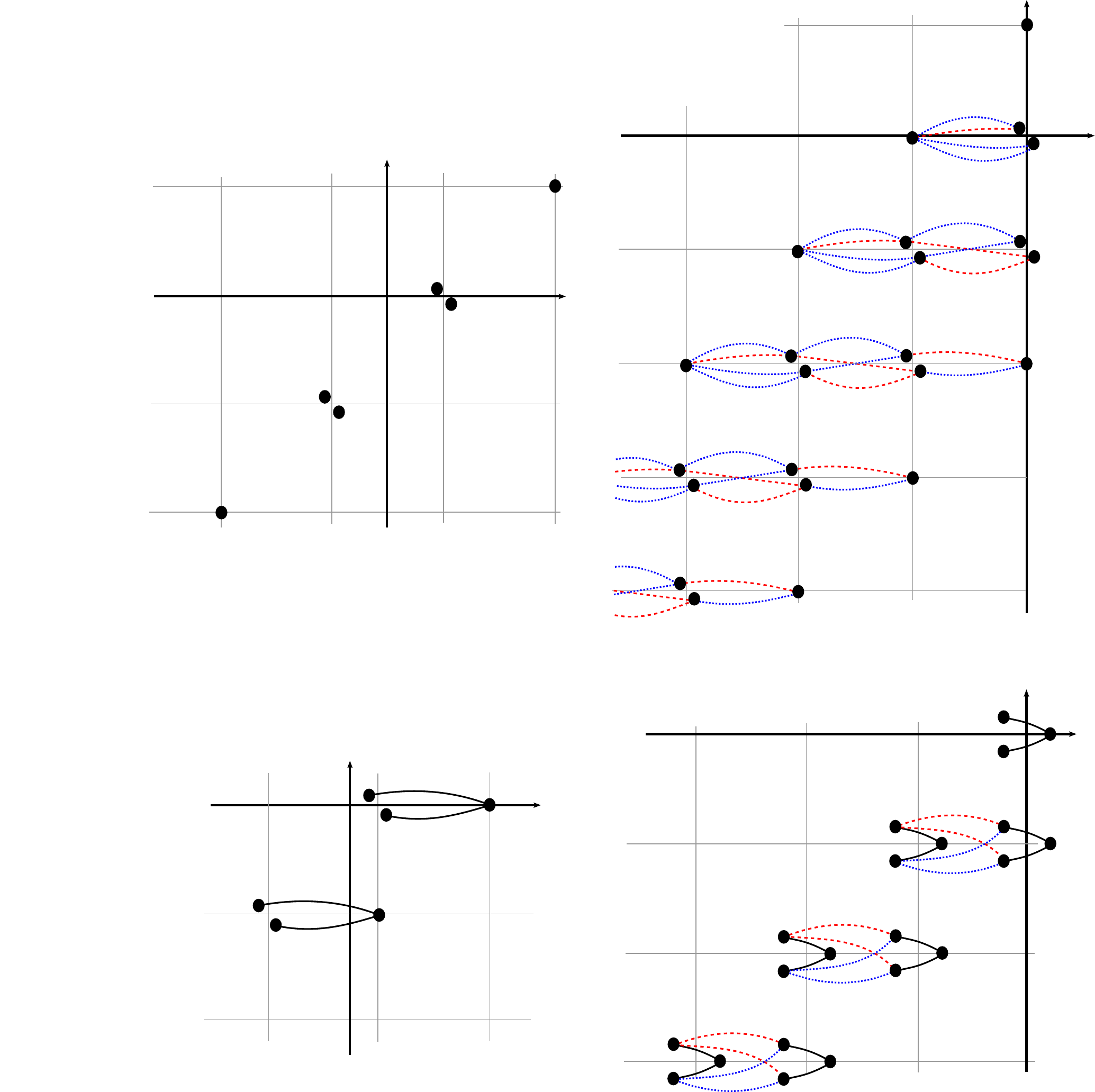}
\caption{The complexes $\widetilde{GC}(G, i)$ (on the left) and $\minusc (G , i)$  (on the right) for $i = 0,1$. Red dashed lines denote multiplication by $V_1$, while blue dotted lines denote multiplication by $V_2$.}
\label{fig:complessi}
\end{figure}
\end{center}
\newpage

\end{document}